\documentclass[10pt,reqno]{amsart}
\usepackage[english]{babel}
\usepackage{graphicx}
\usepackage[numbers]{natbib}
\usepackage[colorlinks=true, allcolors=blue]{hyperref}
\usepackage{amsmath,amsthm,amssymb,amsfonts}
\usepackage{mathrsfs,xfrac,nicefrac,faktor,esint,dsfont}
\usepackage{enumitem}
\usepackage{tikz-cd}
\usepackage{accents}
\usepackage{url}
\usepackage{xparse}
\usepackage{booktabs} 
\usepackage{tabularx} 
\usepackage{xcolor}
\allowdisplaybreaks[4]
\newtheorem{theorem}{Theorem}[section]
\newtheorem{proposition}[theorem]{Proposition}
\newtheorem{lemma}[theorem]{Lemma}

\theoremstyle{definition}
\newtheorem{definition}[theorem]{Definition}
\newtheorem{remark}[theorem]{Remark}

\numberwithin{equation}{section}

\newcommand{\T}{\mathbb{T}}
\renewcommand{\d}{{\mathrm{d}}}
\newcommand{\R}{\mathbb{R}}

\newcommand{\Hs}{{H}^{-s}_x}
\DeclareMathOperator{\supp}{supp}

\title[NONUNIQUENESS of NLW]{Sharp Non-uniqueness of weak solutions to the semilinear wave equation} 
\author[P. Qu]{Peng Qu}
\address{School of Mathematical Sciences \& Shanghai Key Laboratory for Contemporary Applied Mathematics, Fudan University, China.}
\email{pqu@fudan.edu.cn}

\author[M.X. Zhang]{Mingxin Zhang$^*$}
\address{School of Mathematical Sciences, Fudan University, Shanghai 200433, China.}
\email{mxzhang24@m.fudan.edu.cn}
\thanks{*Corresponding author. E-mail address: mxzhang24@m.fudan.edu.cn.}

\keywords {convex integration, semilinear wave equations, non-uniqueness}
\subjclass{35A02, 
35L71, 
35D30.
}

\begin{document}

\begin{abstract}
In this paper, we establish a sharp non-uniqueness result to the semilinear wave equation 
\[
    \partial_{tt}u-\partial_{xx} u= \Theta_1(\partial_t u)^2 + \Theta_2(\partial_x u)^2.
\]
For any $\Theta_1 \Theta_2 < 0$, $\beta<1$, we construct infinitely many weak solutions $u \in C_{t,x}^{\beta}$ with zero initial data. 
To our knowledge, this is the first non-uniqueness result for a semilinear wave equation. 
At this low level of regularity $C_{t,x}^{\beta}(\beta<1)$, the quadratic term is defined by Littlewood--Paley decompositions in $C_t\Hs$. 
Our construction is based on a frequency-localized convex integration scheme. 
Together with the classical local uniqueness theory in \(C_{t,x}^{1}\), 
this non-uniqueness result is sharp.
\end{abstract}

\maketitle

\section{Introduction}
We consider the quadratic semilinear wave equation with initial data  
\begin{equation}\label{eq:NLW}
	\left\{\aligned
	&  \partial_{tt}u-\partial_{xx} u=\Theta_1(\partial_t u)^2 + \Theta_2(\partial_x u)^2,\\
    &   u(0,\cdot)=u_0,\qquad\partial_tu(0,\cdot)=u_1.
	\endaligned
	\right.
\end{equation}
where the unknown scalar function $u:[0,T]\times\T\to\R$ denotes the wave amplitude, 
$\partial_t u(t,x)$ denotes the wave velocity, 
where $\mathbb{T}=\mathbb{R}/\mathbb{Z}$. 
This kind of semilinear wave equation plays an important role in mathematics and physics, 
and its classical solutions and low regularity weak solutions have been widely studied.  
$\Theta_1, \Theta_2 \in \mathbb{R}$ are two constant parameters, which are assumed to satisfy $\Theta_1 \Theta_2 < 0$ throughout this paper. 
There also exist many important works considering the equation in multi-dimensional cases as 
 \begin{equation}\label{eq:NLW2}
	\left\{\aligned
	&  \partial_{tt}u-\Delta u=\Theta_1(\partial_t u)^2 + \Theta_2|\nabla u|^2,\\
    &   u(0,\cdot)=u_0\in H^s,\qquad\partial_tu(0,\cdot)=u_1\in H^{s-1}.
	\endaligned
	\right.
\end{equation}
Roughly speaking, our non-uniqueness construction in this paper can be adapted to these multi-dimensional cases by using a simple extension.  
\subsection{Background on the quadratic semilinear wave equation.}
In this part, we review the results for semilinear wave equations
with nonlinear terms of quadratic derivatives. 
For quadratic derivatives, the scaling critical Sobolev
regularity is $H^{s_c}$ with 
$$
s_c=\frac{n}{2}.
$$
Another threshold arising from the light
cone is $H^{s_l}$ with 
$$
s_l=\frac{n+5}{4}.
$$
Lindblad \cite{Lindblad1993,Lindblad1996} and Fang and Wang \cite{FangWang2005} give a discussion
of these two regularity thresholds for the
well-posedness and ill-posedness.

For the case of three dimensions, Ponce and Sideris \cite{PonceSideris1993}
proved local well-posedness for nonlinear wave equations with quadratic nonlinearities
when the nonlinearity satisfies the null condition. 
While Zhou \cite{Zhou2003Wave} considered the equations \eqref{eq:NLW} with $\Theta_1=0$ 
who establishes local well-posedness in
the range
$$
s_l<s<\frac52.
$$
with $n=4$.
Tataru \cite{Tataru1999} considered the following equations 
\begin{equation}\label{eq:tataru-model}
\square u = G(u)|\nabla u|^2 \notag 
\end{equation}
and proved that for $n\geq5$, the Cauchy problem is locally
well-posed in
$
H^s(\mathbb R^n)\times H^{s-1}(\mathbb R^n)
$
for every
$
s>s_c.
$
On the uniqueness, Sideris \cite{Sideris1981Thesis} proved the uniqueness of the semilinear wave equations in $C_t H_x^{1+}$, 

Moreover, 
Hidano, Jiang, Lee and Wang \cite{HidanoJiangLeeWang2020} proved unconditional local well-posedness for the three-dimensional quadratic semilinear wave equation with radial initial data in $H^s\times H^{s-1}$ for every $s>s_c$. 
Liu and Wang \cite{LiuWang2021} who considered equations of the form
\begin{equation}\label{eq:LiuWang}
\square u
= 
C^{\alpha\beta}
\partial_\alpha u\,\partial_\beta u,\notag
\end{equation}
gave a comprehensive discussion on the well-posedness and ill-posedness. 

As shown above, the failure of local
well-posedness may arise from the failure of continuous dependence,
blow up and other phenomena, 
but the results about the non-uniqueness of weak solutions are few. 
To our knowledge, our paper is the first non-uniqueness result about the semilinear wave equation. 

\subsection{Overview of the convex integration scheme.}
Thanks to the seminal works of De Lellis and Sz\'{e}kelyhidi
\cite{dls09,DS13}, many results in 
non-uniqueness of weak solutions to fluid dynamics systems emerge during the past two
decades. Inspired by Nash's $C^1$ isometric embedding theorem
\cite{Nash1954}, they developed convex integration for the incompressible
Euler equations and constructed highly irregular weak solutions.
Subsequently, Isett \cite{I18} proved the flexibility part of Onsager's
conjecture for the three-dimensional Euler equations. 
More applications can see \cite{DeLellisKwon2022,GiriKwon2022,GKN24,GKN26}.

Another major breakthrough was obtained by Buckmaster and Vicol
\cite{bv19b}, who proved the non-uniqueness of weak solutions to the
three-dimensional Navier--Stokes equations in $C_tL_x^2$.
Their work introduced intermittent convex integration, which allows the
viscous term to be controlled through spatial intermittency. Cheskidov
and Luo \cite{cl20.2} further developed this method by introducing
temporal intermittency and obtained sharp non-uniqueness results for the
incompressible Navier--Stokes equations. These
ideas have since been applied to many related problems, including the
Navier--Stokes type equations \cite{CZZ26,lqzz24,lq20,lt20,MNY26}, transport
equations \cite{cl21,cl22,ms18}, stationary Navier--Stokes equations
\cite{luo19,Zhao26}, magnetohydrodynamic equations \cite{bbv20,lqzz22,MNY25},  
surface quasi-geostrophic equation \cite{BSV19,ChengKwonLi2021,DGR24,IL24,Zhao24}.

Most of the non-uniqueness results were obtained 
in super-critical space for equations of fluid dynamics.
Meanwhile, methods for critical and sub-critical spaces and for other kinds of PDEs 
are developed recently. 
An early example was given by Christ
\cite{CM05}, who constructed non-trivial generalized solutions of the
two-dimensional periodic Navier--Stokes equations in
$
C^0([0,1];H^s(\mathbb T^2)),s<0,
$
with zero initial data. 
Christ \cite{CS05} also established non-uniqueness of generalized
solutions to the one-dimensional periodic nonlinear Schr\"odinger
equation with cubic or quadratic nonlinearities in
$
C^0([0,1];H^s(\mathbb T)),s<0.
$
More recently, Coiculescu and Palasek \cite{MS26} proved non-uniqueness
for the 3-D Navier--Stokes equations in
the critical space $\mathrm{BMO}^{-1}$. 
Cheskidov, Dai and Palasek \cite{CDP25} give a more stronger result about this.
Ashkarian, Bhargava 
Gismondi and Novack \cite{ABGM25} constructed non-trivial stationary
solutions to the two-dimensional Navier--Stokes equations satisfying in
$
\bigcap_{\varepsilon\in(0,1)}
L^{2-\varepsilon}(\mathbb T^2)
\cap
\dot H^{-\varepsilon}(\mathbb T^2).
$
Fujii
\cite{MI26} obtained a sharp classification of uniqueness and
non-uniqueness of mild solutions in the critical Besov spaces
$
\dot B_{q,r}^{-1+\frac{d}{q}}(\mathbb R^d),
$
and constructed infinitely many global solutions from zero initial data. 
Cheskidov and Hou
\cite{CH26} proved the failure of unconditional uniqueness in every
Besov space
$
B_{q,r}^{-\theta}(\mathbb T^d),
\theta>0, d\geq2.
$
Qu and Zhang \cite{QZ26} proved the non-uniqueness of stationary weak solutions to the three-dimensional fractional MHD equations in critical or sub-critical Besov spaces. 
Gismondi and Radu \cite{NA25} constructed non-trivial
stationary solutions to the dissipative surface quasi-geostrophic
equation in 
$
\dot H^{-1-\varepsilon}(\mathbb T^2)
\text{ for every }\varepsilon>0,
$
Gismondi, Ma, Pathak and Radu \cite{GMPR26} 
constructed non-trivial weak solutions to Kdv equations with identically zero initial
data in 
$
\bigcap_{\varepsilon>0}
C_t^0L_x^{k-\varepsilon}([0,1]\times\mathbb T).
$
Gismondi, Golding and Novack \cite{GGN26} established a sharp rigidity--flexibility threshold for the isotropic Landau equation at $L^{6/5}(\mathbb{R}^3)$, 
They constructed non-trivial nonnegative stationary solutions in $L^1(\mathbb{R}^3)\cap L^p(\mathbb{R}^3)$ for every $1<p<6/5$.
Radu \cite{RaduKP26} extended flexibility methods to the
Kadomtsev--Petviashvili equations who constructed infinitely many 
singular weak solutions with zero initial data and compact support in time
for third- and fifth-order KP-I, KP-II
on $\mathbb T^2$ and $\mathbb R^2$. 

For nonlinear wave equations, few non-uniqueness result is known. 
For the quasi-linear systems, 
Mao and Qu \cite{MaoQu2025Lame,MaoQu2025Null} proved non-uniqueness
for the nonlinear dynamical Lam\'e system,  
and they constructed non-trivial $C^1$ weak solutions with zero initial data for a nonlinear
elastodynamic system satisfying the null condition. 
And in this paper, we aim to provide a non-uniqueness result for semilinear wave equations. 

\subsection{Main results.}
In this section, we state the main
non-uniqueness result for \eqref{eq:NLW}. 

First, we recall the basic tool of Littlewood--Paley decomposition.
For any distribution $u\in\mathscr{D}'(\mathbb{T})$, we denote by
$\widehat{u}$ its Fourier transform, which is defined by
\[
    \widehat{u}(k)
    :=
    \left\langle
        u,e^{-2\pi i k x}
    \right\rangle,
    \qquad k\in\mathbb{Z},
\]
where $\langle\cdot,\cdot\rangle$ denotes the duality pairing between
distributions and smooth periodic functions. Let $\chi\in C_c^\infty(\mathbb{R})$ be a even function supported in
$[-1,1]$ and satisfying
\[
    \chi\equiv1
    \qquad\text{ on } [-9/10,9/10].
\]
For each dyadic integer $K\geq1$, we define the Littlewood--Paley
projection $P_{\leq K}$ by
\begin{align}
    P_{\leq K}u(x)
    :=
    \sum_{k\in\mathbb{Z}}
    \chi(k/K)\widehat{u}(k)e^{2\pi i k x}.
    \label{Pkoper1}
\end{align}
So we can define
\begin{align}
    P_{>K}
    :=
    \operatorname{Id}-P_{\leq K},
    \quad
    P_1:=P_{\leq1},
    \quad
    P_K
    :=
    P_{\leq K}-P_{\leq K/2},
    \quad K\geq2.
    \label{Pkoper2}
\end{align}

Since the solutions considered in this paper have low regularity, we need to define the
nonlinear products in negative Sobolev spaces. 
Therefore, we recall the inhomogeneous Sobolev space
$\Hs$ equipped with the norm
\[
    \|u\|_{\Hs}
    :=
    \left(
        \sum_{k\in\mathbb{Z}}
        (1+k^2)^{-s}|\widehat{u}(k)|^2
    \right)^{1/2}.
\]
We define 
\begin{equation}
    \label{e:u*v}
    u \cdot v := \sum_{\tau_1, \tau_2} P_{\tau_1} u \cdot P_{\tau_2} v \quad \text{ in } C_t\Hs, 
\end{equation}
 where $u,v \in \mathscr{D}'(\mathbb{T})$, $ s>0$, $\tau_i\ (i=1,2)$ are dyadic integers and $\displaystyle\sum_{\tau_1, \tau_2}$ denotes summation among all dyadic integer pairs.
 If 
 \begin{equation}
    \label{e:u*u}
    \sum_{\tau_1, \tau_2}\| P_{\tau_1} u \cdot P_{\tau_2} u \| _{C_t\Hs }< \infty,
\end{equation}
we can say that $u \cdot u$ is well-defined as a product in $C_t\Hs$.
\begin{remark}
    In \cite{GGN26}, the nonlinear terms are defined by mollification which
    is equivalent to  \eqref{e:u*u} for the nonlinearity of system \eqref{eq:NLW} in this paper.
    Indeed, if $u_\varepsilon=u*\rho_\varepsilon$, 
    where $\rho_\varepsilon(x):= \sum_{k\in\mathbb{Z}}\chi(\varepsilon k) e^{2\pi i kx}$, 
    then we have 
\[
\partial_tu_\varepsilon=(\partial_tu)*\rho_\varepsilon,
\qquad
\nabla u_\varepsilon=(\nabla u)*\rho_\varepsilon.
\]
Therefore, the quadratic term
\[
\Theta_1(\partial_tu_\varepsilon)^2+\Theta_2|\nabla u_\varepsilon|^2
\]
is exactly the mollified nonlinear term in the definition \eqref{e:u*u}.
If its distributional limit exists as $\varepsilon\to0$, it is equivalent to \eqref{e:u*u}. 
\end{remark}
Now, we can define the singular weak solutions.

\begin{definition}[Singular weak solution]\label{def:weak-solution}
Let $T>0$, $0<\beta<1$, 
we say that
\[
u\in C_{t,x}^{\beta}([0,T]\times\mathbb T),
\qquad
\partial_tu\in C_t ([0,T], B_{\infty,\infty}^{\beta-1}(\mathbb T))
\]
is a singular weak solution to \eqref{eq:NLW} 
if the following conditions are satisfied.
\begin{itemize} 
    \item There exist $s>0$, such that $Q(u,u)$ is well-defined in $C_t([0,T];\Hs(\mathbb T))$, 
    where the bilinear form is defined by 
    \begin{align}
    Q(u,v):=\Theta_1\,\partial_tu\,\partial_tv + \Theta_2\,\partial_x u \partial_x v.
   \end{align}
    \item For every $\varphi\in C_c^\infty([0,T)\times\mathbb T)$, 
    \begin{align}\label{eq:weak-formulation}
        &\int_0^T\left\langle u(t),\,\partial_{tt}\varphi-\partial_{xx}\varphi\right\rangle\,dt-
        \int_0^T\left\langle Q(u,u),\varphi\right\rangle \,dt\notag\\
        &\qquad+\left\langle u_0,\partial_t\varphi(0)\right\rangle -\left\langle u_1,\varphi(0)\right\rangle =0.
    \end{align} 
     \item The initial conditions are satisfied in the sense that
     \begin{align}
        u(t)\longrightarrow u_0\quad\text{in }C_x^{\beta}(\mathbb T),\\
        \partial_tu(t)\longrightarrow u_1\quad\text{in }B_{\infty,\infty}^{\beta-1}(\mathbb T) 
     \end{align}
as $t\to0^+$.
\end{itemize}
\end{definition}
\begin{remark}
    We use the $H_x^{-s}$ instead of $\dot{H}_x^{-s}$ 
    which is different from the construction of singular weak solutions previously. 
    Thus, we can define the singular weak solution without divergent form and 
    meanwhile need different a priori estimates.
\end{remark}
Here, we give the main result of this paper Theorem \ref{thm1}, which states the existence of non-trivial singular weak solutions with zero initial data.
\begin{theorem}
    \label{thm1}
	Given $0< \beta<1,$  $1\le p_*<2$ and $\Theta_1\Theta_2<0$, 
	there exist infinitely many singular weak  solutions
	$$
	u\in
	C_{t,x}^{\beta}([0,T]\times\mathbb T) \cap C_t^1 ([0,T], B_{\infty,\infty}^{\beta-1}(\mathbb T))\cap C_{t}([0,T],  W^{1,p_*}(\mathbb T)) 
	$$
	with the initial data $u_{0} = u_1 = 0$ to the semilinear wave equations \eqref{eq:NLW}. 
\end{theorem}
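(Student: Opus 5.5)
We construct $u$ by a frequency-localized convex integration iteration for the relaxed system
\[
\partial_{tt}u_q-\partial_{xx}u_q-Q(u_q,u_q)=\partial_t R_q+\partial_x S_q ,
\]
or more simply with a scalar stress $\partial_{tt}u_q-\partial_{xx}u_q-Q(u_q,u_q)=-\mathring R_q$ measured in $C_t H^{-s}_x$. Here $u_q$ is smooth and supported in time in $(t_q,T]$, where $t_q\downarrow t_\infty>0$, so the zero initial data are automatic. The structural input is $\Theta_1\Theta_2<0$. For a high-frequency plane-wave perturbation $w=a(t,x)\lambda^{-1}\sin(\lambda(\xi t+ x))$ the resonant low-frequency part of $Q(w,w)$ is $\tfrac12 a^2(\Theta_1\xi^2+\Theta_2)$, and the quadratic form $\Theta_1\xi^2+\Theta_2$ takes both signs as $\xi$ varies. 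We therefore choose two speeds $\xi_\pm$ (typically off the light cone, $\xi^2\neq1$) with $\Theta_1\xi_\pm^2+\Theta_2=\pm1$. This lets us cancel a scalar error of either sign: we write $\mathring R_q=\rho^+-\rho^-$ with $\rho^\pm\ge0$ smooth, after adding a large positive constant profile, and set $a_\pm=\sqrt{2\rho^\pm}$. This is the analogue of the geometric lemma.

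\textbf{Iterative proposition.} We assume $\|u_q\|_{C^{1}_{t,x}}\lesssim\lambda_q^{1-\beta'}$ for some $\beta<\beta'<1$, $\|\mathring R_q\|_{C_tH^{-s}}\le\delta_{q+1}$, and $u_q$ has frequencies $\lesssim\lambda_q$. We produce $u_{q+1}=u_q+w_{q+1}$ with
\[
\|w_{q+1}\|_{C^0}\lesssim\delta_{q+1}^{1/2}\lambda_{q+1}^{-1},\qquad \|\nabla_{t,x}w_{q+1}\|_{C^0}\lesssim\delta_{q+1}^{1/2},
\]
and new error $\mathring R_{q+1}$ small in $C_tH^{-s}$. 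The steps are as follows. First, mollify $u_q,\mathring R_q$ at scale $\ell$. Second, build $w^{(p)}$ from the two families of oscillating waves at frequency $\lambda_{q+1}$, supported on finitely many time cutoffs so that the supports of distinct $\xi$'s interact only at well-separated frequencies. Third, add a temporal/intermittent concentration to reach $W^{1,p_*}$ control for $p_*<2$, and add a corrector so the linear wave operator applied to the oscillation is small: since $\xi^2\ne1$, $\square$ of the phase is $\lambda^2(\xi^2-1)$, which is not small. This forces a genuine choice. Either we take $w$ as a solution of the free wave equation, i.e. phases on the light cone $\xi=\pm1$, where $\Theta_1+\Theta_2$ has one fixed sign, or we cancel sign via slow modulation.

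I expect the main obstacle to be exactly this linear-term/sign tension. My first attempt is to use $\xi=\pm1$ null phases, with the sign flexibility coming from a low-frequency correction. The resonant term $(\Theta_1+\Theta_2)a^2/2$ has a fixed sign, but a constant or slowly varying term of the wrong sign can be absorbed by adding to $u_q$ a low-frequency function $v$ with $\square v = c(t)$. Alternatively, since the error is only measured in $H^{-s}$, the cross term $\Theta_1\partial_t w_1\partial_t w_2+\Theta_2\partial_x w_1\partial_x w_2$ between a left-moving and a right-moving wave has resonant part proportional to $(\Theta_1-\Theta_2)$, which has a definite sign opposite to that of the self-interaction when $\Theta_1\Theta_2<0$. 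Combining self and cross interactions of two waves at the same frequency then yields any desired sign while every building block solves $\square w=0$ exactly.

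The remaining error terms are the interaction term $Q(u_q,w)$, the high-frequency self-interaction, and the transport/modulation errors from $\square(a\sin)$. The first two are estimated in $H^{-s}$ by frequency localization: products at frequency $\sim\lambda_{q+1}$ gain $\lambda_{q+1}^{-s}$, which is where $s>0$ and the $H^{-s}$ definition of $Q$ via Littlewood--Paley sums enter. The third is $O(\delta_{q+1}^{1/2}\ell^{-1})$ at frequency $\lambda_{q+1}$, giving $\lambda_{q+1}^{-s}$ gain as well. Choosing $\lambda_q=a^{b^q}$ and $\delta_q=\lambda_q^{-2\beta''}$ closes the scheme, and interpolation gives $C^\beta_{t,x}$ and $C^1_tB^{\beta-1}_{\infty,\infty}$ convergence. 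The summability \eqref{e:u*u} for the limit follows from the geometric decay of dyadic blocks. Infinitely many solutions come from varying the prescribed energy profile added at step zero, which yields distinct nontrivial limits that all vanish near $t=0$.
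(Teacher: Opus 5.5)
Your final route does not work. With null phases, take $w_1=a\lambda^{-1}\sin(\lambda(x+t))$ and $w_2=b\lambda^{-1}\sin(\lambda(x-t))$. To leading order their cross interaction is $ab(\Theta_2-\Theta_1)\bigl[\cos(2\lambda t)+\cos(2\lambda x)\bigr]$, so its spatially low-frequency part is $ab(\Theta_2-\Theta_1)\cos(2\lambda t)$.

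This term oscillates in time at frequency $2\lambda$ with zero mean, so it can never match a slowly varying $\mathring R_q$. Because $C_tH^{-s}_x$ is a supremum in $t$ of a purely spatial norm, it is an error as large as the resonant terms themselves, and it has to be removed rather than used as a source of sign. In one space dimension, two different null waves never resonate.

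Moreover, when $\Theta_1+\Theta_2=0$, which $\Theta_1\Theta_2<0$ allows, a null wave has no resonant self-interaction at all, so there is nothing to cancel with. Your other fallback, a low-frequency $v$ with $\square v=c(t)$, also fails. The function $c$ must dominate the negative part of $E_q$ pointwise, which is not small, so $2Q(u_q,v)+Q(v,v)$ is a large low-frequency error.

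The obstacle that made you drop your first idea is not present in the paper's setting. The iteration is run with $s>1$. For off-cone speeds, $\square w_{q+1}$ has size $\sigma$ but lives at frequencies $\sim\sigma$, so it costs only $\sigma^{1-s}\ell_q^{-1}$ in $C_tH^{-s}_x$ (the terms $L_{\pm,5}$).

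The paper therefore keeps exactly your two-speed mechanism, with $r_\pm=\mu^{\pm1}(-\Theta_2/\Theta_1)^{1/2}$. This gives $\Theta_1r_+^2+\Theta_2>0>\Theta_1r_-^2+\Theta_2$ (with $\Theta_1>0>\Theta_2$ after $u\mapsto -u$). It also gives $\Theta_1r_+r_-+\Theta_2=0$, the point you leave open: this kills precisely the cross term $\propto\cos(2\pi\sigma(r_+-r_-)t)$ between the two families. Disjoint time cutoffs cannot replace this, since $E_q$ takes both signs at a fixed time.

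Your amplitude bookkeeping must also change. If $\|\nabla_{t,x}w_{q+1}\|_{C^0}\lesssim\delta_{q+1}^{1/2}$ were summable, the limit would be $C^1$, hence zero by the $C^1$ uniqueness proposition in the appendix. There is no inverse divergence, so $E_q$ is small only in $H^{-s}_x$ and its $C^0$ norm does not decay; the amplitudes therefore cannot shrink.

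The paper instead:
\begin{itemize}
\item uses $a_\pm\propto\chi_q(\rho_{q+1}\pm E_q)^{1/2}$ with $\rho_{q+1}=(E_q^2+1)^{1/2}\ge 1$, whose contributions cancel between the two families and leave exactly $E_q$;
\item allows $\|E_q\|_{C^N_{t,x}}\lesssim\lambda_q^{12N+12}$ to grow;
\item gets smallness of $w_{q+1}$ only in $C^\beta_{t,x}$, from $\sigma^{\beta-1}$, and in $W^{1,p_*}$, from the intermittency factor $r^{1/p_*-1/2}$ with $p_*<2$.
\end{itemize}
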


\begin{remark}
    Due to the classical uniqueness result in $C_{t,x}^{1}([0,T]\times\mathbb T)$, see for instance
    Appendix \ref{app:auxiliary-lemma} of this paper, the non-uniqueness result 
    given in Theorem \ref{thm1} is sharp. 
\end{remark}

Now we would like to discuss the main difficulties caused by the structure of the semilinear wave equations 
and explain how to overcome them with new methods. 
Especially, we would like to explain the difference between fluid dynamics and semilinear wave equations. 

\noindent
(i) \textbf{Relaxed system.}
	For the classical Euler and Navier--Stokes equations, we usually write the relaxed system as 
\begin{equation}\label{eq:Euler-Reynolds}
\left\{
\begin{aligned}
&\partial_t v_q+\operatorname{div}(v_q\otimes v_q)+\nabla p_q -\mu \Delta u_q 
    =\operatorname{div}R_q,\notag\\
&\operatorname{div}v_q=0,
\end{aligned}
\right.
\end{equation}
where the Reynolds stress appears as the divergence form and would be cancelled by the nonlinearity.
However, semilinear equations \eqref{eq:NLW} possess a nonlinearity without the divergence form.  
So in order to cancel the error with the nonlinear terms, the error can not be in a derivative form, namely, 
we write the relaxed system as 
\begin{align}
    \partial_{tt}u_q-\partial_{xx} u_q - \big(\Theta_1(\partial_t u_q)^2 + \Theta_2(\partial_x u_q)^2 \big)=E_q.\notag
\end{align}
\noindent
(ii) \textbf{The error.}
    Tranditionally, convex integration methods often use the quadratic nonlinearity to
defect the error, in a manner, 
\begin{align}
\int_{\mathbb{T}^d} w_{q+1}\otimes w_{q+1}\,dx
\approx \varrho\,\mathrm{Id}-R_q. \notag
\end{align}
For the nonlinearity with divergent form, one usually deals 
with the correction term $\mathrm{div} \varrho\,\mathrm{Id} = \nabla\varrho$ which can then be absorbed into the pressure 
or just disappear when $\varrho$ is independent of $x$. 
However, this strategy fails here due to lack of the divergent form. 
To overcome this difficulty, we construct the perturbations composed of two parts with two propagation speeds. 
With the assumption $\Theta_1 \Theta_2 < 0$, we can write the low frequency of the nonlinearity as  
\begin{align}
\int_{\mathbb{T}} \Theta_1 (\partial_t w_{q+1})^2 + \Theta_2 (\partial_x w_{q+1})^2\,dx
\approx -E_q. \notag
\label{eq:scalar-defect-cancellation}
\end{align} 
Noting that we do not have an inverse-divergence
to define $E_q$. In order to utilize the benefits from the high frequency
of the perturbations, which is essential
to get the smallness of the error $E_q$, we need to estimate $E_q$ in
space of negative regularity, which is the main reason we use the singular
convex integration scheme in this paper. 
Moreover, we work in the nonhomogeneous space $H_x^{-s}$, which 
allows us to include the zero Fourier mode and define the weak solutions more conveniently.\\
\noindent
(iii) \textbf{The building blocks.} 
In order to cancel $E_q$ with low frequency part of
the nonlinearity, we need to carefully choose the two parts of the building blocks, especially their propagation
speeds \eqref{eq:three-compatible-conditions}, as well as a new form of
the amplitudes \eqref{eq:amplitude-plus-new} and \eqref{eq:amplitude-minus-new}. 
To keep the frequency of perturbations in a shell, we also introduce the correction part $w_{q+1}^{(c)}$ as in the previous singular convex integration schemes.  

\subsection{Organization.}
The paper is organized as follows.

In Section \ref{sec2}, we state the relaxed system and the main iteration lemma. In particular, we give the
inductive estimates that need to be proved on the approximate solutions and the associated
error terms. 

In Section \ref{sec3}, we are devoted to the construction of the perturbations. We first
introduce the intermittent building blocks and state their basic properties. 
Then we introduce $w_{q+1}^{(p)}$ as the principal part of the perturbation which would be 
used to cancel the error $E_q$.  
And we define  $w_{q+1}^{(c)}$ to keep the frequency of the perturbation in the shell.  
At the end of the section, we establish the estimates for the
perturbations in H\"older and Sobolev spaces.

In Section \ref{sec4}, we estimate the new error $E_{q+1}$. We decompose $E_{q+1}$ into three parts  
$E_{q+1}^{\mathrm{osc}}$, $E_{q+1}^{\mathrm{lin}}$ and $E_{q+1}^{\mathrm{cor}}$. 
and then give these estimates in Lemma \ref{lemma:hs}. 

In Section \ref{sec5}, we prove non-uniquenes result Theorem \ref{thm1} by applying the
iteration lemma and passing to the limits.

In Appendix \ref{app:auxiliary-lemma}, we show the uniqueness of solutions to \eqref{eq:NLW} in $C_{t,x}^1$ which follows the argument of Sideris \cite{Sideris1981Thesis}.  

\subsection{Notation.}
In this part, we explain for some main notations used in this paper:
\begin{enumerate}[label=\normalfont(\roman*)]
	\item  $X\lesssim Y$ means that $X\leq C Y$ for some constant $C>0$.
	\item  For $N\geq0$,
    $$
    \|f\|_{C^N_{t,x}}:=
    \sum_{m_1+m_2\leq N} \|\partial_t^{m_1}\partial_x^{m_2} f\|_{L^\infty_{t,x}}.
    $$
    And for $s>0$,
     $$
    \|f\|_{C_t\Hs}
    :=
    \sup_{t\in[0,T]}
    \left(\sum_{k\in\mathbb Z}(1+k^2)^{-s}|\widehat f(t,k)|^2
    \right)^{1/2}.
    $$
\end{enumerate}
\section{Inductive Lemma}\label{sec2}
We consider the relaxed system 
\begin{equation}\label{eq:approx-system}
    \left\{\aligned
	&   \partial_{tt}u_q-\partial_{xx} u_q - \big(\Theta_1(\partial_t u_q)^2 + \Theta_2(\partial_x u_q)^2 \big)=E_q,\\
    &   u_q(0,\cdot)=0,\qquad\partial_t u_q(0,\cdot)=0.
	\endaligned
	\right.
\end{equation}
where $u_q$ is the approximate solution to \eqref{eq:NLW} and $E_q$ is the error at the $q$-th stage with $q\in \mathbb{N}$.

Next, we take the frequency parameter $\lambda_q$ and the amplitude parameter $\delta_{q}$ which are chosen in the following way
\begin{equation}\label{la}
	\lambda_q=a^{(b^q)}, \ \
	\delta_{q}= \lambda_1^{3\gamma} \lambda_{q}^{-2\gamma}.
\end{equation}
Fix $0 < \varepsilon \ll s-1$ with $\varepsilon^{-1}\in \mathbb{N}$ depending on $\beta$ and $p_*$ in Theorem \ref{thm1}. 
Then the parameter $b\gg 1$ is chosen to satisfy 
\begin{align} \label{epsilon_condition}
b\varepsilon \in \mathbb{N}, 
\end{align}
where $b$ is a large integer.  
Moreover we select the appropriate parameter $\gamma$ that is a very small regularity parameter 
such that it satisfies $2 b\gamma \le \varepsilon$ and $\gamma b^2 \ll 1$.
Finally, we determine 
$a\in \mathbb{N}$ that is a large dyadic integer. 

Moreover, 
we introduce a decreasing sequence of temporal points $\{ t_q\}$ in order to make sure the
zero initial data for each $u_q$. For $T>0$, we define
\[
t_0:=\frac{T}{4},\qquad t_{q+1}:=t_q-\ell_q,
\qquad \ell_q:=\lambda_q^{-48}.
\]
For suitably large $a$, it is clear that  
\[
\sum_{q=0}^{\infty}\lambda_q^{-12}<\frac{t_0}{2},
\]
and thus we have 
\[
t_q\downarrow t_\infty:=t_0-\sum_{q=0}^{\infty}\lambda_q^{-12}>\frac{t_0}{2}>0.
\]
\begin{lemma}[Iteration lemma]
\label{lem:iteration}
Given $1\le p_* <2$ and $0 <\beta <1$, for any $s>1$, assume that $(u_q,E_q)$ is a smooth solution to \eqref{eq:approx-system}
satisfying 
\begin{align}
    &\|u_q\|_{C_{t,x}^N}\leq \lambda_{q}^{6N},\qquad 1\leq N\leq 8,\label{eq:uq-CN}\\
    &\|E_q\|_{C_t\Hs}\lesssim \delta_{q+1},\label{eq:inductive-defect}\\
    &\|E_q\|_{C_{t,x}^N} \lesssim \lambda_{q}^{12N+12}, \qquad 0\leq N\leq 6.\label{eq:inductive-defect-CN}
\end{align}
and 
\begin{align}
    &u_q=E_q=0 \qquad \text{on }[0,t_q]\times\mathbb T, \label{eq:time-support-q}\\
     &\supp_k {\hat{u}}_{q} \subset B(0,\lambda_{q}^5), \label{eq:space-support-q}
\end{align}
then there exists $(u_{q+1},E_{q+1})$ which is also a smooth solution to \eqref{eq:NLW} 
and satisfies 
 \begin{align}
    &\|u_{q+1}\|_{C_{t,x}^N}\leq \lambda_{q+1}^{6N},\qquad 1\leq N\leq 8,\label{eq1:uq-CN}\\
    &\|E_{q+1}\|_{C_t\Hs}\lesssim \delta_{q+2},\label{eq1:inductive-defect}\\
    &\|E_{q+1}\|_{C_{t,x}^N} \lesssim \lambda_{q+1}^{12N+12}, \qquad 0\leq N\leq 6.\label{eq1:inductive-defect-CN}  
\end{align}
and
\begin{align}
    &u_{q+1}=E_{q+1}=0 \qquad \text{on }[0,t_{q+1}]\times\mathbb T, \label{eq1:time-support-q}\\
    &\supp_k {\hat{u}}_{q+1} \subset B(0,\lambda_{q+1}^5), \label{eq1:space-support-q}
\end{align}
Moreover, the perturbation $w_{q+1} := u_{q+1} - u_q$ satisfies the following conditions:
\begin{enumerate}[label=\normalfont(\roman*)]
        \item 
        Frequency localization 
        \begin{align}\label{item:prop-hatw-supp}
        &\supp_k \widehat{w}_{q+1} \subset \left\{ k \in \mathbb{Z}: \frac{1}{2} \lambda_{q+1}^5 < |k| < \frac{9}{10}  \lambda_{q+1}^5 \right\}, 
        \end{align} 

        \item
        Small perturbation in low regularity spaces 
        \begin{align}\label{item:prop-w-Lp}
            \|w_{q+1}\|_{C_{t,x}^{\beta}} +\|\partial_t w_{q+1}\|_{C_tB_{\infty,\infty}^{\beta-1}} + \|w_{q+1}\|_{C_t W_x^{1,p_*}} < \delta_{q+2}; 
         \end{align} 

        \item
        Smallness of nonlinearity in $C_t\Hs$
\begin{align}\label{item:prop-w-paraproduct}
   & \|Q(w_{q+1},w_{q+1})\|_{C_t\Hs}\notag\\
    &+\sum_{M\leq 2N_q}
    \Big(|Q(P_Mu_q,w_{q+1})\|_{C_t\Hs} +\|Q(w_{q+1},P_Mu_q)\|_{C_t\Hs}\Big)
    \lesssim\delta_{q+1},
\end{align}
        where $M$ ranges over dyadic integers. 
    \end{enumerate}
\end{lemma}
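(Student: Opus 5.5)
We construct $u_{q+1}=u_q+w_{q+1}$ with $w_{q+1}=w_{q+1}^{(p)}+w_{q+1}^{(c)}$, where the principal part cancels $E_q$ through the low-frequency part of $Q(w,w)$, and then estimate the new error in $C_t\Hs$.

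\textbf{Step 1: cutoffs.} We first mollify $E_q$ in space-time at scale $\ell_q$ and apply a temporal cutoff $\chi_q(t)$ that vanishes on $[0,t_{q+1}]$ and equals $1$ on $[t_q,T]$. Since $E_q=0$ on $[0,t_q]$ and $t_{q+1}=t_q-\ell_q$, this costs nothing on $E_q$ itself. Call the result $\tilde E_q$. Write $\tilde E_q=\rho_+-\rho_-$ with $\rho_\pm\ge0$ smooth, for instance $\rho_\pm=\sqrt{\tilde E_q^2+\delta_{q+1}^2\chi_q^2}\pm \tilde E_q$ suitably normalised.

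\textbf{Step 2: building blocks.} Take two travelling oscillations $\cos(2\pi\lambda_{q+1}^5\cdot\frac{7}{10}(x- c_\pm t))$ with speeds $c_+,c_-$. For a wave of this form, the mean of $Q$ is proportional to $\Theta_1c^2+\Theta_2$. Because $\Theta_1\Theta_2<0$, we can choose $c_+$ and $c_-$ so that $\Theta_1c_+^2+\Theta_2>0>\Theta_1c_-^2+\Theta_2$ (equation \eqref{eq:three-compatible-conditions}). We also require $c_+\neq c_-$, which makes the cross term oscillate at nonzero frequency. Set
\[
w^{(p)}=\lambda_{q+1}^{-5}\Big(a_+\cos(\phi_+)+a_-\cos(\phi_-)\Big),
\]
with amplitudes $a_\pm\approx\sqrt{2\rho_\mp/|\Theta_1c_\pm^2+\Theta_2|}$ (up to the $\lambda^{-5}$ normalisation). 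Then the averaged part of $Q(w^{(p)},w^{(p)})$ equals $-\tilde E_q$. To make the averaged part $L^1$-sized but small in $L^\infty$-free norms, we insert spatial intermittency: replace the plane waves by concentrated periodic profiles of period $\lambda_{q+1}^{-\sigma}$, whose $L^{p_*}$ norms of the derivative are small because $p_*<2$. Finally, apply $P$ onto the shell $\frac12\lambda_{q+1}^5<|k|<\frac9{10}\lambda_{q+1}^5$. The difference between $w^{(p)}$ and this projection is $w^{(c)}$. It is small because the amplitudes have frequency at most $\lambda_q^{O(1)}\ll\lambda_{q+1}^5$; this is where the choice $\gamma b^2\ll1$ and the large dyadic $a$ enter. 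This gives \eqref{item:prop-hatw-supp} and \eqref{eq1:space-support-q}. The $C^N$ bounds \eqref{eq1:uq-CN} and \eqref{item:prop-w-Lp} follow from the $\lambda_{q+1}^{-5}$ amplitude prefactor, with interpolation giving $C^\beta$ and $B^{\beta-1}_{\infty,\infty}$ smallness for $\beta<1$.

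\textbf{Step 3: error.} The new error splits as
\[
E_{q+1}=E^{\rm osc}+E^{\rm lin}+E^{\rm cor}.
\]
Here $E^{\rm osc}=Q(w,w)+E_q$, made of high-frequency remainders. $E^{\rm lin}=\square w-2Q(u_q,w)$. $E^{\rm cor}$ collects the mollification and cutoff errors. The linear wave part is small because each phase nearly solves the wave equation up to factor $\lambda_{q+1}^{5}(1-c_\pm^2)$ times $\lambda_{q+1}^{-5}$. Measured in $H^{-s}$ with $s>1$, every term oscillating at frequency $\sim\lambda_{q+1}^{5}$ gains $\lambda_{q+1}^{-5s}$. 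This kills the growth $\lambda_{q+1}^{10}\cdot\lambda_{q+1}^{-10}$ from two derivatives, and likewise controls $\square w$ and the products $Q(P_Mu_q,w)$ in \eqref{item:prop-w-paraproduct}, yielding $\delta_{q+2}$. The non-averaged part of $Q(w^{(p)},w^{(p)})$ lives at frequency $\gtrsim\lambda_{q+1}^{\sigma}$ and gains a negative power as well. The $C^N$ bounds \eqref{eq1:inductive-defect-CN} follow crudely from \eqref{eq1:uq-CN}. Time support is preserved by the cutoff from Step 1.

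\textbf{Main obstacle.} The hardest step is the oscillation error. Two issues must be handled together: the cross term between the $+$ and $-$ waves, and the interaction of the intermittent profiles with the non-divergence nonlinearity. The $H^{-s}$ gain must beat both the amplitude size $\sim\delta_{q+1}^{1/2}$ and the intermittency losses. I would first try to set the $\pm$ frequencies on the same shell but with distinct speeds. The cross terms then have time-frequency separated from zero, and one can exploit the temporal oscillation after integrating against test functions. If that is insufficient, I would instead place the $\pm$ waves on separated spatial shells.
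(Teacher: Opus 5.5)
Your overall architecture matches the paper's: two travelling waves whose speeds straddle the sign change of $\Theta_1c^2+\Theta_2$, an intermittent profile, projection onto the shell with a corrector, and the split into oscillation, linear and corrector errors, with $s>1$ absorbing $\square w$ through the gain $\sigma^{-s}$. The gap is exactly the step you flag as the main obstacle, and your primary plan for it does not work.

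With both waves at the same spatial frequency $k$ and speeds $c_+\neq c_-$,
\[
\sin\bigl(2\pi k(x-c_+t)\bigr)\sin\bigl(2\pi k(x-c_-t)\bigr)=\tfrac12\cos\bigl(2\pi k(c_+-c_-)t\bigr)-\tfrac12\cos\bigl(2\pi k(2x-(c_++c_-)t)\bigr).
\]
So the cross term of $Q(w^{(p)},w^{(p)})$ contains a piece proportional to $(\Theta_1c_+c_-+\Theta_2)\,a_+a_-\phi_{(r)}^2\cos\bigl(2\pi k(c_+-c_-)t\bigr)$, which has zero spatial frequency. Its oscillation is purely temporal. But \eqref{eq1:inductive-defect} is a supremum in $t$, and so is item (iii), which is what makes $Q(u,u)$ meaningful in $C_tH^{-s}_x$. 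Pairing with test functions therefore gains nothing. At times where the cosine is near $\pm1$, the zero Fourier mode has size $|\Theta_1c_+c_-+\Theta_2|\fint a_+a_-\phi_{(r)}^2$. With your choice of $\rho_\pm$ one has $\rho_+\rho_-\propto\delta_{q+1}^2\chi_q^2$, hence $a_+a_-\propto\delta_{q+1}\chi_q$. Unless $\Theta_1c_+c_-+\Theta_2=0$, this gives $\|E_{q+1}\|_{C_tH^{-s}_x}\gtrsim\delta_{q+1}$, so the error does not improve to $\delta_{q+2}$. Separately, the amplitudes are not of size $\delta_{q+1}^{1/2}$: $E_q$ is small only in $H^{-s}_x$ and can be as large as $\lambda_q^{12}$ pointwise. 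All smallness must therefore come from frequency gains and intermittency.

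The missing idea is a third algebraic constraint on the speeds. Since $\Theta_1\Theta_2<0$, the paper takes $r_\pm=\mu^{\pm1}(-\Theta_2/\Theta_1)^{1/2}$ with $\mu>1$. This gives simultaneously $\Theta_1r_+^2+\Theta_2>0>\Theta_1r_-^2+\Theta_2$ and $\Theta_1r_+r_-+\Theta_2=0$; see \eqref{eq:three-compatible-conditions}. The last identity makes the leading cross interaction vanish identically. Only the self-interactions then remain, and their means are tuned by $a_{(\pm)}$ to reproduce $E_q$.

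Your fallback would also close: put the two waves at different frequencies $\sigma_+\neq\sigma_-$ inside $(\frac12,\frac9{10})\lambda_{q+1}^5$ with $|\sigma_+-\sigma_-|\sim\lambda_{q+1}^5$. The cross term then sits at spatial frequency $\gtrsim\lambda_{q+1}^5$ and gains $\lambda_{q+1}^{-5s}$, like the $\cos(4\pi\sigma(\cdot))$ self-interaction terms. However, you only mention it as a contingency, and the route you actually commit to does not close.
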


\section{Perturbations}\label{sec3} 
In this section, our goal is to construct the perturbation $w_{q+1}$.
First, we introduce the principal parts $w^{(p)}_{q+1}$ which are characterized by four parameters $r$, $\lambda$, $\tau$ and $\sigma$:
\begin{equation}\label{choose-prar}
	r := \lambda_{q+1}^{-10\varepsilon},\
	\lambda := \lambda_{q+1},\   \tau:=\lambda_{q+1}^{2}, \ \tilde{\sigma}:=\lambda_{q+1}^{5}, \  \sigma : = \frac34 \tilde{\sigma} \in \mathbb{N}>50\tau^{2}.
\end{equation}
\subsection{Intermittency.}
As in \cite{bv19b}, we introduce spatial intermittent profiles. 
Let $\phi : \mathbb{R} \to \mathbb{R}$ be a smooth cut-off function supported on
the interval $[-1,1]$
and normalize $\phi$ to satisfy
\begin{equation}\label{e4.91}
	\int_{\mathbb{R}} \phi^2(x)\d x = 1,
\end{equation}
and 
\begin{equation}\label{e4.92}
  \int_{\mathbb{R}} \phi(x) \d x = 0.
\end{equation}
The corresponding rescaled cut-off functions are defined by
\begin{equation*}
	\phi_{r}(x) := {r^{-\frac{1}{2}}}\phi\left(\frac{x}{r}\right), 
\end{equation*}
Here,  $\phi_{r}$ is supported on the interval $[-r,r]$.
By an abuse of notation,
we periodize $\phi_{r}$ so that
they are treated as periodic functions defined on $\mathbb{T}$.

To simplify notations, we set
\begin{align}\label{snp-endpt2}
	\phi_{(r)}(x) := \phi_{r}(\lambda r x).
\end{align}
Then,
$\phi_{(r)}(x)$ is  mean zero on $\T$ with 
\begin{align} 
     \fint_{\mathbb{T}} \phi_{(r)}^2(x) \d x =1.
\end{align} 
\begin{lemma} [\cite{bbv20}\textit{Lemmas 5.1-5.2}] 
    \label{buildingblockestlemma}
	For any $p \in [1,\infty]$ and $N \in \mathbb{N}$, we have
	\begin{align}
		&\left\|\nabla^{N} \phi_{(r)}\right\|_{L^{p}}
		\lesssim_N {r}^{\frac 1p- \frac12}  \lambda^{N}.  \label{intermittent estimates2-endpt2}
	\end{align}
\end{lemma}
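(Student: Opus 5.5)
This is the standard intermittent building-block estimate, and I would prove it by scaling plus periodization. First I would treat a single copy of the unperiodized profile. Since $\phi_r(x)=r^{-1/2}\phi(x/r)$, the substitution $y=x/r$ gives
\[
\|\partial_x^N\phi_r\|_{L^p(\mathbb R)}=r^{-\frac12-N}\,r^{\frac1p}\,\|\partial^N\phi\|_{L^p(\mathbb R)}\lesssim_N r^{\frac1p-\frac12-N}.
\]
Because $r=\lambda_{q+1}^{-10\varepsilon}<1$, the support $[-r,r]$ lies strictly inside a fundamental domain of $\mathbb T$. Hence the periodized function coincides with $\phi_r$ on one period, and its $L^p(\mathbb T)$ norms are the same as above.

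Next I would bring in the composition with $x\mapsto \lambda r x$. I assume, as the construction implicitly requires, that $\lambda r\in\mathbb N$. This is exactly what the conditions $b\varepsilon\in\mathbb N$ and $\lambda_q=a^{(b^q)}$ with dyadic $a$ are designed to provide: they give $\lambda r=a^{b^{q+1}(1-10\varepsilon)}$ with integer exponent. With $\lambda r\in\mathbb N$, $\phi_{(r)}$ is a well-defined $1$-periodic function. The chain rule gives $\partial_x^N\phi_{(r)}(x)=(\lambda r)^N(\partial^N\phi_r)(\lambda r x)$. For an integer $m=\lambda r$ and any $1$-periodic $g$, I would use
\[
\int_{\mathbb T}|g(mx)|^p\,dx=\int_{\mathbb T}|g|^p\,dx,
\]
which follows by splitting $\mathbb T$ into $m$ intervals of length $1/m$. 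Applying this,
\[
\|\partial_x^N\phi_{(r)}\|_{L^p(\mathbb T)}=(\lambda r)^N\|\partial^N\phi_r\|_{L^p(\mathbb T)}\lesssim_N (\lambda r)^N r^{\frac1p-\frac12-N}=r^{\frac1p-\frac12}\lambda^N.
\]
This is \eqref{intermittent estimates2-endpt2}.

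The case $p=\infty$ is the same computation with sup norms, where rescaling introduces no Jacobian factor. The normalizations \eqref{e4.91}–\eqref{e4.92} carry over in the same way: the mean-zero property and $\fint\phi_{(r)}^2=1$ follow from the change of variables $y=x/r$ in the integrals together with the invariance of the integral under $x\mapsto mx$.

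I expect no real obstacle here. The only point that needs care is the integrality of $\lambda r$, which is what makes the rescaled periodic function well defined and the $L^p$ norm invariant under $x\mapsto \lambda r x$. I would confirm this from the parameter choices \eqref{epsilon_condition} and \eqref{choose-prar}.
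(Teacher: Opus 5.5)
Your proof is correct and is the standard scaling-plus-periodization argument behind the cited Lemmas 5.1--5.2 of \cite{bbv20}; the paper gives no proof of its own, and your computation $\|\partial_x^N\phi_{(r)}\|_{L^p(\mathbb T)}=(\lambda r)^N\|\partial^N\phi_r\|_{L^p}\lesssim_N \lambda^N r^{\frac1p-\frac12}$ is exactly the intended one. You also correctly verify the detail the paper leaves implicit: $\lambda r=a^{b^{q+1}-10(b\varepsilon)b^{q}}\in\mathbb N$ by $b\varepsilon\in\mathbb N$, which makes $\phi_{(r)}$ well defined on $\mathbb T$ and its $L^p$ norm invariant under $x\mapsto\lambda r x$.
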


\subsection{Amplitudes of perturbations}
To ensure the cancellation of the error $E_q$ in the previous stage, 
while keeping the temporal support controlled, 
we choose the temporal cut-off function adapted to the time support of $E_q$. 
Let 
\[
    S_q:=\operatorname{supp}_t E_q
    =
    \overline{
    \left\{
    t\in[0,T]:
    E_q(t,\cdot)\not\equiv0
    \right\}}.
\]
and 
use the parameter $\ell_q$ as before, 
\begin{equation}\label{eq:epsilon-theta-choice}
    \ell_q : = \lambda_q^{-48}. \notag
\end{equation}
We choose a smooth temporal cut-off function
$\chi_q\in C^\infty([0,T])$ satisfying
\begin{align}
    &0\leq \chi_q\leq1,     \\
    &\chi_q\equiv1
    \quad\text{on }
    \mathcal N_{\ell_q/4}(S_q), \\
    &\operatorname{supp}_t\chi_q
    \subset
    \mathcal N_{\ell_q/2}(S_q),  \\
    &\|\partial_t^m\chi_q\|_{L_t^\infty}
    \lesssim \ell_q^{-m},
    \qquad 1\leq m\leq 8.\label{chi-est-time}
\end{align}
In particular,
\[
    \chi_q^2(t)E_q(t,x)=E_q(t,x)
    \qquad
    \text{for every }(t,x)\in[0,T]\times\mathbb T.
\]
Moreover,
\[
    \operatorname{supp}_t\partial_t^m\chi_q
    \subset
    \mathcal N_{\ell_q/2}(S_q)
    \setminus
    \mathcal N_{\ell_q/4}(S_q),
    \qquad \forall m\geq1,
\]
so that
\[
    E_q=0
    \qquad
    \text{on }
    \operatorname{supp}_t\partial_t^m\chi_q .
\]
By our basic assumption $\Theta_1 \Theta_2 < 0$, 
we assume without loss of generality that 
\begin{equation}\label{eq:sign-ab}
    \Theta_1>0,\qquad \Theta_2<0,
\end{equation}
while for the case $ \Theta_1<0$ and $\Theta_2>0$, one can just treat $\tilde{u}=-u$. 
Define
\begin{equation}\label{eq:rpm-choice}
    r_+:=\mu\left(-\frac{\Theta_2}{\Theta_1}\right)^{1/2},
    \qquad
    r_-:=\mu^{-1}\left(-\frac{\Theta_2}{\Theta_1}\right)^{1/2},
\end{equation}
which are positive constants for any given parameter $\mu>1$. 
For instance, we can just choose $\mu=2$.  
Moreover, their product satisfies 
\begin{align}\label{eq:cross-direction-cancellation}
    \Theta_1r_+r_-+\Theta_2
    &=
    \Theta_1\left[
        \mu\left(-\frac{\Theta_2}{\Theta_1}\right)^{1/2}
    \right]
    \left[
        \mu^{-1}\left(-\frac{\Theta_2}{\Theta_1}\right)^{1/2}
    \right]+\Theta_2 =0.
\end{align}
By the definition of $r_+$ in \eqref{eq:rpm-choice}, we have
\begin{align}\label{eq:rplus-sign}
    \Theta_1r_+^2+\Theta_2
    &=
    \Theta_1\mu^2\left(-\frac{\Theta_2}{\Theta_1}\right)+\Theta_2
    =
    -\Theta_2\mu^2+\Theta_2
    =
    \Theta_2(1-\mu^2)
    >
    0,
\end{align}
where the last inequality follows from $\Theta_2<0$ and $\mu>1$. On the other hand, the definition of $r_-$ gives
\begin{align}\label{eq:rminus-sign}
    \Theta_1r_-^2+\Theta_2
    &=
    \Theta_1\mu^{-2}\left(-\frac{\Theta_2}{\Theta_1}\right)+\Theta_2
    =
    -\Theta_2\mu^{-2}+\Theta_2
    =
    \Theta_2(1-\mu^{-2})
    <
    0.
\end{align}
Therefore, the choice \eqref{eq:rpm-choice} simultaneously guarantees
\begin{equation}\label{eq:three-compatible-conditions}
    \Theta_1r_+^2+\Theta_2>0,\qquad
    \Theta_1r_-^2+\Theta_2<0,\qquad
    \Theta_1r_+r_-+\Theta_2=0.
\end{equation}
These basic facts would be used in our analysis on the nonlinear interactions of the perturbations. 

Now we define the amplitudes $a_{(+)}(t,x)$ and $a_{(-)}(t,x)$ as 
\begin{align}
    a_{(+)}(t,x)
    &:=
    \chi_q(t)
    \left(
        \frac{\rho_{q+1}(t,x)+E_q(t,x)}
        {4\pi^2(\Theta_1r_+^2+\Theta_2)}
    \right)^{1/2}, \label{eq:amplitude-plus-new}\\
    a_{(-)}(t,x)
    &:=
    \chi_q(t)
    \left(
        \frac{\rho_{q+1}(t,x)-E_q(t,x)}
        {-4\pi^2(\Theta_1r_-^2+\Theta_2)}
    \right)^{1/2}. \label{eq:amplitude-minus-new}
\end{align}
where 
\begin{equation}\label{eq:rho-amplitude}
    \rho_{q+1}(t,x)
    :=
    \left(E_q(t,x)^2+1\right)^{1/2},
\end{equation}
Since $\rho_{q+1}>|E_q|$, we can get 
\[
    \rho_{q+1}-E_q>0,
    \qquad
    \rho_{q+1}+E_q>0
\]
on $[0,T]\times\mathbb T$. 
The amplitudes defined in \eqref{eq:amplitude-plus-new}--\eqref{eq:amplitude-minus-new} are smooth, since $\varepsilon_{q+1}>0$ guarantees that the arguments of both square roots remain strictly positive.
For the estimates of $a_{(+)}$ and $a_{(-)}$,
by \eqref{eq:inductive-defect-CN}, \eqref{eq:epsilon-theta-choice} and \eqref{chi-est-time}, we have  
\begin{equation}\label{eq:rho-E-composition}
    \left\|
    \bigl(\rho_{q+1}\pm E_q\bigr)^{1/2}
    \right\|_{C_{t,x}^N}
    \lesssim
    \ell_q^{-(N+1)},
    \qquad
    0\leq N\leq6.
\end{equation}
Then, we can get 
\begin{equation}\label{eq:amplitude-estimate}
    \|a_{(+)}\|_{C_{t,x}^N}
    +
    \|a_{(-)}\|_{C_{t,x}^N}
    \lesssim
    \ell_q^{-(N+1)},
    \qquad
   0\leq N\leq6.
\end{equation}

\subsection{Construction of the perturbations.}
Now, we define the principal perturbation as 
\begin{align}\label{eq:two-building-blocks}
    w_{(+)}^{(p)}(t,x)
    &:=
    \frac{1}{\sigma}a_{(+)}(t,x)
    \phi_{(r)}(x)
    \cos\left(2\pi\sigma(x+r_+t)\right), \\
    w_{(-)}^{(p)}(t,x)
    &:=
    \frac{1}{\sigma}a_{(-)}(t,x)
      \phi_{(r)}(x)
    \cos\left(2\pi\sigma(x+r_-t)\right),
\end{align}
and 
\begin{equation}\label{eq:total-perturbation-two-directions}
    w_{q+1}^{(p)}:=w_{(+)}^{(p)}+w_{(-)}^{(p)}.
\end{equation}
We first calculate the time derivatives of the two principal perturbations, 
\begin{align}
    \partial_t w_{(+)}^{(p)}
    &=
    \frac{1}{\sigma}
    \partial_t a_{(+)}(t,x)
    \phi_{(r)}(x)
    \cos\left(2\pi\sigma(x+r_+t)\right)\notag\\
    &\quad
    -
    2\pi r_+
    a_{(+)}(t,x)
    \phi_{(r)}(x)
    \sin\left(2\pi\sigma(x+r_+t)\right),\\
    \partial_t w_{(-)}^{(p)}
    &=
    \frac{1}{\sigma}
    \partial_t a_{(-)}(t,x)
    \phi_{(r)}(x)
    \cos\left(2\pi\sigma(x+r_-t)\right)\notag\\
    &\quad
    -
    2\pi r_-
    a_{(-)}(t,x)
    \phi_{(r)}(x)
    \sin\left(2\pi\sigma(x+r_-t)\right).
\end{align}
Similarly,
the spatial derivatives are given by
\begin{align}
    \partial_x w_{(+)}^{(p)}
    &=
    \frac{1}{\sigma}
    \partial_x a_{(+)}(t,x)
    \phi_{(r)}(x)
    \cos\left(2\pi\sigma(x+r_+t)\right)\notag\\
    &\quad
    +
    \frac{1}{\sigma}
    a_{(+)}(t,x)
    \partial_x\phi_{(r)}(x)
    \cos\left(2\pi\sigma(x+r_+t)\right)\notag\\
    &\quad
    -
    2\pi
    a_{(+)}(t,x)
    \phi_{(r)}(x)
    \sin\left(2\pi\sigma(x+r_+t)\right),\\
    \partial_x w_{(-)}^{(p)}
    &=
    \frac{1}{\sigma}
    \partial_x a_{(-)}(t,x)
    \phi_{(r)}(x)
    \cos\left(2\pi\sigma(x+r_-t)\right)\notag\\
    &\quad
    +
    \frac{1}{\sigma}
    a_{(-)}(t,x)
    \partial_x\phi_{(r)}(x)
    \cos\left(2\pi\sigma(x+r_-t)\right)\notag\\
    &\quad
    -
    2\pi
    a_{(-)}(t,x)
    \phi_{(r)}(x)
    \sin\left(2\pi\sigma(x+r_-t)\right).
\end{align}
Hence, 
\begin{align}
    \partial_t w_{q+1}^{(p)}
    &=
    -2\pi r_+
    a_{(+)}(t,x)
    \phi_{(r)}(x)
    \sin\left(2\pi\sigma(x+r_+t)\right)\notag\\
    &\quad
    -
    2\pi r_-
    a_{(-)}(t,x)
    \phi_{(r)}(x)
    \sin\left(2\pi\sigma(x+r_-t)\right)\notag\\
    &\quad
    +
    \frac{1}{\sigma}
    \partial_t a_{(+)}(t,x)
    \phi_{(r)}(x)
    \cos\left(2\pi\sigma(x+r_+t)\right)\notag\\
    &\quad
    +
    \frac{1}{\sigma}
    \partial_t a_{(-)}(t,x)
    \phi_{(r)}(x)
    \cos\left(2\pi\sigma(x+r_-t)\right),\\
    \partial_x w_{q+1}^{(p)}
    &=
    -2\pi
    a_{(+)}(t,x)
    \phi_{(r)}(x)
    \sin\left(2\pi\sigma(x+r_+t)\right)\notag\\
    &\quad
    -
    2\pi
    a_{(-)}(t,x)
    \phi_{(r)}(x)
    \sin\left(2\pi\sigma(x+r_-t)\right)\notag\\
    &\quad
    +
    \frac{1}{\sigma}
    \partial_x a_{(+)}(t,x)
    \phi_{(r)}(x)
    \cos\left(2\pi\sigma(x+r_+t)\right)\notag\\
    &\quad
    +
    \frac{1}{\sigma}
    a_{(+)}(t,x)
    \partial_x\phi_{(r)}(x)
    \cos\left(2\pi\sigma(x+r_+t)\right)\notag\\
    &\quad
    +
    \frac{1}{\sigma}
    \partial_x a_{(-)}(t,x)
    \phi_{(r)}(x)
    \cos\left(2\pi\sigma(x+r_-t)\right)\notag\\
    &\quad
    +
    \frac{1}{\sigma}
    a_{(-)}(t,x)
    \partial_x\phi_{(r)}(x)
    \cos\left(2\pi\sigma(x+r_-t)\right).
\end{align}
We now calculate the nonlinear interactions as 
\begin{align}
    &\Theta_1\left(\partial_t w_{q+1}^{(p)}\right)^2
    +\Theta_2\left(\partial_x w_{q+1}^{(p)}\right)^2
    + E_q(t,x) \notag\\
    &=
    \Theta_1\Bigg[
    -2\pi r_+a_{(+)}(t,x)\phi_{(r)}(x)
    \sin\left(2\pi\sigma(x+r_+t)\right) \notag\\
    &\qquad
    -2\pi r_-a_{(-)}(t,x)\phi_{(r)}(x)
    \sin\left(2\pi\sigma(x+r_-t)\right) \notag\\
    &\qquad
    +\frac{1}{\sigma}\partial_ta_{(+)}(t,x)\phi_{(r)}(x)
    \cos\left(2\pi\sigma(x+r_+t)\right) \notag\\
    &\qquad
    +\frac{1}{\sigma}\partial_ta_{(-)}(t,x)\phi_{(r)}(x)
    \cos\left(2\pi\sigma(x+r_-t)\right)
    \Bigg]^2 \notag\\
    &\quad
    +\Theta_2\Bigg[
    -2\pi a_{(+)}(t,x)\phi_{(r)}(x)
    \sin\left(2\pi\sigma(x+r_+t)\right) \notag\\
    &\qquad
    -2\pi a_{(-)}(t,x)\phi_{(r)}(x)
    \sin\left(2\pi\sigma(x+r_-t)\right) \notag\\
    &\qquad
    +\frac{1}{\sigma}
    \left(
    \partial_xa_{(+)}(t,x)\phi_{(r)}(x)
    +a_{(+)}(t,x)\partial_x\phi_{(r)}(x)
    \right)
    \cos\left(2\pi\sigma(x+r_+t)\right) \notag\\
    &\qquad
    +\frac{1}{\sigma}
    \left(
    \partial_xa_{(-)}(t,x)\phi_{(r)}(x)
    +a_{(-)}(t,x)\partial_x\phi_{(r)}(x)
    \right)
    \cos\left(2\pi\sigma(x+r_-t)\right)
    \Bigg]^2  + E_q(t,x)
    \notag\\
    &=
     E_q(t,x)+
    4\pi^2(\Theta_1r_+^2+\Theta_2)a_{(+)}^2(t,x)\phi_{(r)}^2(x)
    \sin^2\left(2\pi\sigma(x+r_+t)\right) \notag\\
    &\quad
    +4\pi^2(\Theta_1r_-^2+\Theta_2)a_{(-)}^2(t,x)\phi_{(r)}^2(x)
    \sin^2\left(2\pi\sigma(x+r_-t)\right) \notag\\
    &\quad
    +\underbrace{
    8\pi^2(\Theta_1r_+r_-+\Theta_2)a_{(+)}(t,x)a_{(-)}(t,x)\phi_{(r)}^2(x)
    \sin\left(2\pi\sigma(x+r_+t)\right)
    \sin\left(2\pi\sigma(x+r_-t)\right)
    }_{E^{\mathrm{osc},1}} \notag\\
    &\quad
    \underbrace{
    \begin{aligned}
        &-\frac{4\pi \Theta_1 r_+}{\sigma}
        a_{(+)}(t,x)\partial_ta_{(+)}(t,x)\phi_{(r)}^2(x)
        \sin\left(2\pi\sigma(x+r_+t)\right)
        \cos\left(2\pi\sigma(x+r_+t)\right)\\
        &-\frac{4\pi \Theta_1 r_+}{\sigma}
        a_{(+)}(t,x)\partial_ta_{(-)}(t,x)\phi_{(r)}^2(x)
        \sin\left(2\pi\sigma(x+r_+t)\right)
        \cos\left(2\pi\sigma(x+r_-t)\right)\\
        &-\frac{4\pi \Theta_1 r_-}{\sigma}
        a_{(-)}(t,x)\partial_ta_{(+)}(t,x)\phi_{(r)}^2(x)
        \sin\left(2\pi\sigma(x+r_-t)\right)
        \cos\left(2\pi\sigma(x+r_+t)\right)\\
        &-\frac{4\pi \Theta_1 r_-}{\sigma}
        a_{(-)}(t,x)\partial_ta_{(-)}(t,x)\phi_{(r)}^2(x)
        \sin\left(2\pi\sigma(x+r_-t)\right)
        \cos\left(2\pi\sigma(x+r_-t)\right)\\
        &-\frac{4\pi \Theta_2}{\sigma}
        a_{(+)}(t,x)\phi_{(r)}(x)
        \left(
        \partial_xa_{(+)}(t,x)\phi_{(r)}(x)
        +a_{(+)}(t,x)\partial_x\phi_{(r)}(x)
        \right)\\
        &\qquad\qquad
        \sin\left(2\pi\sigma(x+r_+t)\right)
        \cos\left(2\pi\sigma(x+r_+t)\right)\\
        &-\frac{4\pi \Theta_2}{\sigma}
        a_{(+)}(t,x)\phi_{(r)}(x)
        \left(
        \partial_xa_{(-)}(t,x)\phi_{(r)}(x)
        +a_{(-)}(t,x)\partial_x\phi_{(r)}(x)
        \right)\\
        &\qquad\qquad
        \sin\left(2\pi\sigma(x+r_+t)\right)
        \cos\left(2\pi\sigma(x+r_-t)\right)\\
        &-\frac{4\pi \Theta_2}{\sigma}
        a_{(-)}(t,x)\phi_{(r)}(x)
        \left(
        \partial_xa_{(+)}(t,x)\phi_{(r)}(x)
        +a_{(+)}(t,x)\partial_x\phi_{(r)}(x)
        \right)\\
        &\qquad\qquad
        \sin\left(2\pi\sigma(x+r_-t)\right)
        \cos\left(2\pi\sigma(x+r_+t)\right)\\
        &-\frac{4\pi \Theta_2}{\sigma}
        a_{(-)}(t,x)\phi_{(r)}(x)
        \left(
        \partial_xa_{(-)}(t,x)\phi_{(r)}(x)
        +a_{(-)}(t,x)\partial_x\phi_{(r)}(x)
        \right)\\
        &\qquad\qquad
        \sin\left(2\pi\sigma(x+r_-t)\right)
        \cos\left(2\pi\sigma(x+r_-t)\right)
    \end{aligned}
    }_{E^{\mathrm{osc},2}} \notag\\
    &\quad
    \underbrace{
    \begin{aligned}
        &+\frac{\Theta_1}{\sigma^2}
        \left[
        \partial_ta_{(+)}(t,x)\phi_{(r)}(x)
        \cos\left(2\pi\sigma(x+r_+t)\right)
        \right]^2\\
        &+\frac{\Theta_1}{\sigma^2}
        \left[
        \partial_ta_{(-)}(t,x)\phi_{(r)}(x)
        \cos\left(2\pi\sigma(x+r_-t)\right)
        \right]^2\\
        &+\frac{2\Theta_1}{\sigma^2}
        \partial_ta_{(+)}(t,x)\partial_ta_{(-)}(t,x)\phi_{(r)}^2(x)
        \cos\left(2\pi\sigma(x+r_+t)\right)
        \cos\left(2\pi\sigma(x+r_-t)\right)\\
        &+\frac{\Theta_2}{\sigma^2}
        \left[
        \left(
        \partial_xa_{(+)}(t,x)\phi_{(r)}(x)
        +a_{(+)}(t,x)\partial_x\phi_{(r)}(x)
        \right)
        \cos\left(2\pi\sigma(x+r_+t)\right)
        \right]^2\\
        &+\frac{\Theta_2}{\sigma^2}
        \left[
        \left(
        \partial_xa_{(-)}(t,x)\phi_{(r)}(x)
        +a_{(-)}(t,x)\partial_x\phi_{(r)}(x)
        \right)
        \cos\left(2\pi\sigma(x+r_-t)\right)
        \right]^2\\
        &+\frac{2\Theta_2}{\sigma^2}
        \left(
        \partial_xa_{(+)}(t,x)\phi_{(r)}(x)
        +a_{(+)}(t,x)\partial_x\phi_{(r)}(x)
        \right)\\
        &\qquad\qquad
        \left(
        \partial_xa_{(-)}(t,x)\phi_{(r)}(x)
        +a_{(-)}(t,x)\partial_x\phi_{(r)}(x)
        \right)\\
        &\qquad\qquad
        \cos\left(2\pi\sigma(x+r_+t)\right)
        \cos\left(2\pi\sigma(x+r_-t)\right).
    \end{aligned}
    }_{E^{\mathrm{osc},3}}\label{def:Eosc}
\end{align}
By the choices of $a_{(+)}(t,x)$ and $a_{(-)}(t,x)$, 
we can get 
\begin{align}
    &4\pi^2(\Theta_1r_+^2+\Theta_2)a_{(+)}^2(t,x)\phi_{(r)}^2(x)
    \sin^2\left(2\pi\sigma(x+r_+t)\right) \notag\\
    &\quad
    +4\pi^2(\Theta_1r_-^2+\Theta_2)a_{(-)}^2(t,x)\phi_{(r)}^2(x)
    \sin^2\left(2\pi\sigma(x+r_-t)\right)-E_q(t,x)\notag\\
    &=
    2\pi^2(\Theta_1r_+^2+\Theta_2)a_{(+)}^2(t,x)\phi_{(r)}^2(x) \notag\\
    &\quad
    +2\pi^2(\Theta_1r_-^2+\Theta_2)a_{(-)}^2(t,x)\phi_{(r)}^2(x) -E_q(t,x)\notag\\
    &\quad
    -2\pi^2(\Theta_1r_+^2+\Theta_2)a_{(+)}^2(t,x)\phi_{(r)}^2(x)
    \cos\left(4\pi\sigma(x+r_+t)\right) \notag\\
    &\quad
    -2\pi^2(\Theta_1r_-^2+\Theta_2)a_{(-)}^2(t,x)\phi_{(r)}^2(x)
    \cos\left(4\pi\sigma(x+r_-t)\right) \notag\\
    &=
    2\pi^2(\Theta_1r_+^2+\Theta_2)a_{(+)}^2(t,x)P_{>(\lambda r)/2}\left(\phi_{(r)}^2(x)\right) \notag\\
    &\quad
    +2\pi^2(\Theta_1r_-^2+\Theta_2)a_{(-)}^2(t,x)P_{>(\lambda r)/2}\left(\phi_{(r)}^2(x)\right)\notag\\
    &\quad
    -2\pi^2(\Theta_1r_+^2+\Theta_2)a_{(+)}^2(t,x)\phi_{(r)}^2(x)
    \cos\left(4\pi\sigma(x+r_+t)\right) \notag\\
    &\quad
    -2\pi^2(\Theta_1r_-^2+\Theta_2)a_{(-)}^2(t,x)\phi_{(r)}^2(x)
    \cos\left(4\pi\sigma(x+r_-t)\right).
\end{align}

To correct the frequency of $w_{q+1}^{(p)}$, we define  
\begin{align}
    w_{(+)}^{(c)}(t,x)
    &:=
    -\frac{1}{\sigma}
    P_{>\tau}\left(a_{(+)}\right)(t,x)
    \phi_{(r)}(x)
    \cos\left(2\pi\sigma(x+r_+t)\right)\notag\\
    &\quad
    -\frac{1}{\sigma}
    P_{\leq\tau}\left(a_{(+)}\right)(t,x)
    P_{>\tau}\left(\phi_{(r)}\right)(x)
    \cos\left(2\pi\sigma(x+r_+t)\right),\label{def:wc1}
\end{align}
\begin{align}
    w_{(-)}^{(c)}(t,x)
    &:=
    -\frac{1}{\sigma}
    P_{>\tau}\left(a_{(-)}\right)(t,x)
    \phi_{(r)}(x)
    \cos\left(2\pi\sigma(x+r_-t)\right)\notag\\
    &\quad
    -\frac{1}{\sigma}
    P_{\leq\tau}\left(a_{(-)}\right)(t,x)
    P_{>\tau}\left(\phi_{(r)}\right)(x)
    \cos\left(2\pi\sigma(x+r_-t)\right),\label{def:wc2}
\end{align}
and 
\begin{align}
    w_{q+1}^{(c)}
    &:=
    w_{(+)}^{(c)}
    +
    w_{(-)}^{(c)}.
\end{align}

We now verify the resulting frequency localization, 
by using
$a_{(+)}
=
P_{\leq\tau}(a_{(+)})
+
P_{>\tau}(a_{(+)})$,
\begin{align}
    w_{(+)}^{(p)}
    +
    w_{(+)}^{(c)}=
    \frac{1}{\sigma}
    P_{\leq\tau}\left(a_{(+)}\right)(t,x)
    P_{\leq\tau}\left(\phi_{(r)}\right)(x)
    \cos\left(2\pi\sigma(x+r_+t)\right).
\end{align}
In the same way,
we obtain
\begin{align}
    w_{(-)}^{(p)}
    +
    w_{(-)}^{(c)}
    =
    \frac{1}{\sigma}
    P_{\leq\tau}\left(a_{(-)}\right)(t,x)
    P_{\leq\tau}\left(\phi_{(r)}\right)(x)
    \cos\left(2\pi\sigma(x+r_-t)\right).
\end{align}
Therefore,
the total perturbation is given by
\begin{align}
    w_{q+1}
    &:=
    w_{q+1}^{(p)}
    +
    w_{q+1}^{(c)}\notag\\
    &=
    \frac{1}{\sigma}
    P_{\leq\tau}\left(a_{(+)}\right)(t,x)
    P_{\leq\tau}\left(\phi_{(r)}\right)(x)
    \cos\left(2\pi\sigma(x+r_+t)\right)\notag\\
    &\quad
    +
    \frac{1}{\sigma}
    P_{\leq\tau}\left(a_{(-)}\right)(t,x)
    P_{\leq\tau}\left(\phi_{(r)}\right)(x)
    \cos\left(2\pi\sigma(x+r_-t)\right).
    \label{eq:total-frequency-corrected-perturbation}
\end{align}
We note that 
\begin{lemma}[\cite{CH26}\textit{Lemma 6.2}]
    \label{lemma:arithmetic-sigma}
    For large enough $a$ and parameters $\lambda_{q+1}$, $\tau $, $\sigma$, $\tilde{\sigma}$ given as in \eqref{choose-prar}, we can get 
    \[(\sigma - 2\tau,\sigma + 2\tau) \cup (-\sigma- 2\tau , -\sigma + 2\tau)  \subset \left\{k \in \mathbb{Z}:\frac{1}{2} \tilde{\sigma} < |k| < \frac{9}{10} \tilde{\sigma} \right\}. \]
\end{lemma}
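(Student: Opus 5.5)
The claim reduces to elementary arithmetic on the parameters fixed in \eqref{choose-prar}. We have $\sigma=\frac34\tilde\sigma$ with $\tilde\sigma=\lambda_{q+1}^5$ and $\tau=\lambda_{q+1}^2$. Since $\lambda_{q+1}=a^{(b^{q+1})}$ and $a$ is a large dyadic integer, $\tilde\sigma$ is divisible by $4$, so $\sigma\in\mathbb N$ is admissible. By symmetry under $k\mapsto -k$ (the target set is even), it suffices to treat the interval $(\sigma-2\tau,\sigma+2\tau)$. Every $k$ there satisfies $|k|=k$ provided $\sigma>2\tau$, and this is immediate from $\sigma>50\tau^2\ge 2\tau$.

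The plan is to verify the two inequalities $\sigma-2\tau\ge \frac12\tilde\sigma$ and $\sigma+2\tau\le \frac{9}{10}\tilde\sigma$. Each endpoint-open interval then lies inside the open annulus $\{\frac12\tilde\sigma<|k|<\frac9{10}\tilde\sigma\}$, and intersecting with $\mathbb Z$ is harmless. Substituting $\sigma=\frac34\tilde\sigma$ turns these into
\[
2\tau\le \tfrac14\tilde\sigma,\qquad 2\tau\le \tfrac{3}{20}\tilde\sigma,
\]
and the second is the stronger one. So everything reduces to $40\,\lambda_{q+1}^2\le 3\,\lambda_{q+1}^5$, that is, $\lambda_{q+1}^3\ge 40/3$. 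This holds as soon as $a\ge 3$, since $\lambda_{q+1}\ge a$. Alternatively, one can use $\sigma>50\tau^2$ directly: it gives $\tilde\sigma>\frac{200}{3}\tau^2\ge \frac{40}{3}\tau$ for $\tau\ge1$, which is again the required bound.

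There is no real obstacle here. The only care needed is to keep the intervals open, so that the boundary points $\pm\frac12\tilde\sigma$ and $\pm\frac9{10}\tilde\sigma$ cause no issue, and to confirm that the choice of $a$ as a large dyadic integer makes $\sigma$ an integer. Both points are settled by taking $a$ large enough, which is exactly the hypothesis of the statement.
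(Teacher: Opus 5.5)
Your proof is correct. The inclusion reduces to $\sigma-2\tau\ge\frac12\tilde\sigma$ and $\sigma+2\tau\le\frac{9}{10}\tilde\sigma$, that is, $\tilde\sigma\ge 8\tau$ and $\tilde\sigma\ge\frac{40}{3}\tau$. With $\tilde\sigma=\lambda_{q+1}^5$ and $\tau=\lambda_{q+1}^2$ this becomes $\lambda_{q+1}^3\ge\frac{40}{3}$, which holds for any admissible $a$.

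The paper gives no argument of its own here; it imports the statement from \cite{CH26}, Lemma 6.2. Your direct computation is a self-contained replacement, and it has two advantages. First, it shows that the only quantitative input is $\tilde\sigma\ge\frac{40}{3}\tau$, which is far weaker than the standing assumption $\sigma>50\tau^2$ in \eqref{choose-prar}; that assumption is not needed for this step. Second, it spares the reader from checking that the parameter regime of the cited lemma matches \eqref{choose-prar}.

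You also read the statement correctly as an inclusion of the integer points of the two real intervals, which is all that Lemma \ref{lemma:supp} uses. The care you take with open endpoints is harmless but unnecessary. For $\lambda_{q+1}\ge 4$ both inequalities are strict, so even the closed intervals $[\pm\sigma-2\tau,\pm\sigma+2\tau]$ lie inside the annulus.
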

\noindent Now, we use it to analyze the support of $w_{q+1}$.
\begin{lemma}
    \label{lemma:supp}
    Given $\tau,\sigma \ge1$ as in \eqref{choose-prar}, the perturbations
    \begin{equation}
        w_{q+1} := w^{(p)}_{q+1} + w^{(c)}_{q+1} 
    \end{equation}
    satisfies 
    \begin{align}
        \label{e:supp-hatw}
        &\supp_k \widehat{w}_{q+1} \subset \left\{  k \in \mathbb{Z}:\frac{1}{2} \tilde{\sigma} < |k| < \frac{9}{10} \tilde{\sigma} \right\},\\
        \label{e:supp-hatu}
        &\supp_k {\hat{u}}_{q+1} \subset B(0,\lambda_{q+1}^5).
    \end{align}
\end{lemma}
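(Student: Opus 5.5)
The plan is to compute the spatial Fourier support of each product in \eqref{eq:total-frequency-corrected-perturbation} directly, working at each fixed time $t$. Consider the first term,
\[
\frac{1}{\sigma}P_{\leq\tau}(a_{(+)})(t,\cdot)\,P_{\leq\tau}(\phi_{(r)})\,\cos\bigl(2\pi\sigma(x+r_+t)\bigr).
\]

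\textbf{Support of the low-frequency factors.} By the definition \eqref{Pkoper1}, the multiplier $\chi(k/\tau)$ vanishes for $|k|\geq\tau$, since $\chi$ is supported in $[-1,1]$. Hence both $P_{\leq\tau}(a_{(+)})(t,\cdot)$ and $P_{\leq\tau}(\phi_{(r)})$ have Fourier support in $\{|k|\le \tau\}$.

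\textbf{Support of the product of the two cut-offs.} The Fourier transform of a product is the convolution of the Fourier coefficients. So $P_{\leq\tau}(a_{(+)})P_{\leq\tau}(\phi_{(r)})$ has Fourier support in $\{|k|\le 2\tau\}$; the endpoints are in fact excluded, giving support in $(-2\tau,2\tau)$.

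\textbf{Effect of the oscillation.} Since $\sigma\in\mathbb N$, we can write
\[
\cos\bigl(2\pi\sigma(x+r_+t)\bigr)=\tfrac12\bigl(e^{2\pi i\sigma x}e^{2\pi i\sigma r_+t}+e^{-2\pi i\sigma x}e^{-2\pi i\sigma r_+t}\bigr).
\]
The time-dependent phases are constants in $x$. Multiplying by $e^{\pm2\pi i\sigma x}$ shifts Fourier supports by $\pm\sigma$. Therefore the first term has Fourier support in $(\sigma-2\tau,\sigma+2\tau)\cup(-\sigma-2\tau,-\sigma+2\tau)$. The same holds for the $(-)$ term, because the speed $r_-$ only affects the $x$-independent phase.

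Lemma \ref{lemma:arithmetic-sigma} then gives \eqref{e:supp-hatw}. Arithmetically this is immediate: $\sigma=\tfrac34\tilde\sigma$ and $2\tau=2\lambda_{q+1}^2\ll\tfrac{3}{20}\tilde\sigma$ once $a$ is large.

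\textbf{Support of $\hat u_{q+1}$.} For \eqref{e:supp-hatu}, write $u_{q+1}=u_q+w_{q+1}$. By the inductive hypothesis \eqref{eq:space-support-q}, $\operatorname{supp}_k\hat u_q\subset B(0,\lambda_q^5)\subset B(0,\lambda_{q+1}^5)$, because $\lambda_q<\lambda_{q+1}$. By \eqref{e:supp-hatw}, $\operatorname{supp}_k\hat w_{q+1}\subset\{|k|<\tfrac{9}{10}\tilde\sigma\}\subset B(0,\lambda_{q+1}^5)$. The Fourier support of a sum lies in the union of the supports, which concludes the proof.

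The only point needing any care is that the frequency shift uses $\sigma\in\mathbb N$, so that $e^{2\pi i\sigma x}$ is a genuine periodic character on $\mathbb T$. Beyond that the argument is routine.
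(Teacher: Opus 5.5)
Your proposal is correct and follows the paper's proof: $P_{\leq\tau}$ localizes both factors to $|k|<\tau$, so their product lives in $|k|<2\tau$. The cosine then shifts this support by $\pm\sigma$, Lemma \ref{lemma:arithmetic-sigma} places the result in the annulus, and $u_{q+1}=u_q+w_{q+1}$ together with \eqref{eq:space-support-q} gives \eqref{e:supp-hatu}. Your extra remarks (excluded endpoints, $\sigma\in\mathbb{N}$ making $e^{2\pi i\sigma x}$ a character on $\mathbb{T}$, and the explicit condition $2\tau<\tfrac{3}{20}\tilde\sigma$) only make explicit what the paper leaves implicit.
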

\begin{proof}
By the definition of the Littlewood--Paley projection, we have 
\[
\operatorname{supp}_k
\widehat{P_{\leq\tau}(a_{(+)})}\cup 
\operatorname{supp}_k
\widehat{P_{\leq\tau}(a_{(-)})}
\subset B(0,\tau),
\qquad
\operatorname{supp}_k
\widehat{P_{\leq\tau}(\phi_{(r)})}
\subset B(0,\tau).
\]
Since 
\[
\operatorname{supp}_k
\mathcal{F}_x
\left[
\cos\left(2\pi\sigma(x+r_{+,-} t)\right)
\right]
\subset
\{\sigma,-\sigma\},
\]
we can get 
\begin{align*}
\operatorname{supp}_k\widehat{w}_{q+1}
\subset
B(\sigma,2\tau)
\cup
B(-\sigma,2\tau).
\end{align*}
By Lemma~\ref{lemma:arithmetic-sigma},
\[
B(\sigma,2\tau)
\cup
B(-\sigma,2\tau)
\subset
\left\{
k\in\mathbb Z:
\frac12\tilde{\sigma}<|k|<\frac9{10}\tilde{\sigma}
\right\}.
\]
Therefore,
\[
\operatorname{supp}_k\widehat{w}_{q+1}
\subset
\left\{
k\in\mathbb Z:
\frac12\tilde{\sigma}<|k|<\frac9{10}\tilde{\sigma}
\right\},
\]
which proves \eqref{e:supp-hatw}. 
Since $\supp_k {\hat{u}}_{q} \subset B(0,\lambda_{q}^5)$ and $u_{q+1} = u_q + w_{q+1} $, 
we can get \eqref{e:supp-hatu} directly.
\end{proof}

\subsection{Estimates on the perturbations}
To estimate the perturbations, we first introduce a basic lemma. 
\begin{lemma}[\cite{CH26}\textit{Lemma 7.1}]\label{lemma:frequencylemma}
       Let $1 \le p \le \infty$. For any $f \in C^\infty(\mathbb{T}^d)$ and any dyadic integer $N$, it holds that
    \begin{align*}
        \|P_{\le N} f\|_{L^p} &\lesssim \|f\|_{L^p}, \\
        \|P_{>N} f\|_{L^p} &\lesssim N^{-1} \|\nabla f\|_{L^p}.
    \end{align*}
\end{lemma}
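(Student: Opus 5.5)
The first bound follows from Young's inequality for convolutions, and the second from writing the high-frequency part through a multiplier of order $-1$ applied to $\nabla f$; I treat the one-dimensional case $d=1$, the general $d$ being identical with $\mathbb{Z}^d$ in place of $\mathbb{Z}$.

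\textbf{Low-frequency bound.} Write $P_{\le N}f = K_N * f$, where
\[
K_N(x)=\sum_{k\in\mathbb Z}\chi(k/N)e^{2\pi i k x}.
\]
By Poisson summation, $K_N$ is the periodization of $N\check\chi(N\cdot)$. Hence
\[
\|K_N\|_{L^1(\mathbb T)}\le \|N\check\chi(N\cdot)\|_{L^1(\mathbb R)}=\|\check\chi\|_{L^1(\mathbb R)},
\]
and this is finite and independent of $N$ because $\chi$ is Schwartz. Young's inequality then gives $\|P_{\le N}f\|_{L^p}\lesssim\|f\|_{L^p}$ uniformly in $N$ and $p\in[1,\infty]$.

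\textbf{High-frequency bound.} Set $\psi(\xi):=\dfrac{1-\chi(\xi)}{2\pi i\,\xi}$. Since $1-\chi$ vanishes on $[-9/10,9/10]$, $\psi$ is smooth, and it is a symbol of order $-1$. The Fourier multiplier of $P_{>N}$ is $1-\chi(k/N)=N^{-1}\psi(k/N)\cdot 2\pi i k$, so
\[
P_{>N}f = N^{-1}\,T_N(\partial_x f),\qquad T_N g:=\sum_k \psi(k/N)\hat g(k)e^{2\pi i k x}.
\]
It remains to show $\|T_N\|_{L^p\to L^p}\lesssim 1$ uniformly in $N$. The function $\psi$ is not Schwartz, so I would decompose it dyadically as $\psi=\sum_{j\ge0}\psi_j$, where $\psi_j$ is supported where $|\xi|\sim 2^j$ and $\psi_j(\xi)=2^{-j}\eta_j(2^{-j}\xi)$ with the $\eta_j$ smooth and uniformly bounded in every $C^m$. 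The same Poisson-summation argument gives kernel norms $\|\check{\psi_j}\|_{L^1}\lesssim 2^{-j}$. Summing over $j$ yields $\|\check\psi\|_{L^1(\mathbb R)}<\infty$, and then periodizing exactly as in the low-frequency step gives a uniform $L^1(\mathbb T)$ bound for the kernel of $T_N$. Young's inequality then yields $\|P_{>N}f\|_{L^p}\lesssim N^{-1}\|\nabla f\|_{L^p}$.

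\textbf{Main obstacle.} The only delicate point is this integrability of $\check\psi$, since $\psi$ decays only like $|\xi|^{-1}$ and is not Schwartz. The dyadic summation handles it: the $j$-th piece has kernel norm $\lesssim 2^{-j}$, so the series converges, with no endpoint issues at $p=1$ or $p=\infty$. As an alternative route, one may write $P_{>N}=\sum_{M>N}P_M$, bound each term by $\|P_Mf\|_{L^p}\lesssim M^{-1}\|\nabla f\|_{L^p}$ via the same kernel argument, and sum the geometric series $\sum_{M>N}M^{-1}\lesssim N^{-1}$.
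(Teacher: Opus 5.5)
Your argument is correct. The paper gives no proof of this lemma; it is quoted from Lemma 7.1 of \cite{CH26}. What you give is the standard proof: uniform $L^1$ bounds on the convolution kernels plus Young's inequality, with the gain $N^{-1}$ obtained by factoring $2\pi i k$ out of the multiplier $1-\chi(k/N)$. That factoring is legitimate because $1-\chi$ vanishes near the origin. Your dyadic bound $\|\check{\psi_j}\|_{L^1}\lesssim 2^{-j}$ correctly shows $\check\psi\in L^1(\mathbb R)$, even though $\psi$ decays only like $|\xi|^{-1}$.

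Two points are worth tightening, though neither is a gap.

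First, in the high-frequency step, ``periodizing exactly as in the low-frequency step'' is not literally Poisson summation. The function $\psi(\cdot/N)$ is not integrable, and $\sum_k\psi(k/N)e^{2\pi i k x}$ does not converge absolutely; for $N=1$ it is the sawtooth series. What you actually use is an elementary fact: for $g\in L^1(\mathbb R)$, the periodization $G=\sum_n g(\cdot+n)$ satisfies $\|G\|_{L^1(\mathbb T)}\le\|g\|_{L^1(\mathbb R)}$ and $\widehat G(k)=\widehat g(k)$. Apply it to $g=N\check\psi(N\cdot)$, whose Fourier transform is the continuous function $\psi(\cdot/N)$. This gives $T_N h=G*h$ for smooth $h$, together with the uniform bound.

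Second, for $d\ge2$ the reduction is not literally identical, since you cannot divide by $\xi$. Instead write $1-\chi(|\xi|)=\sum_{j=1}^d\psi^{(j)}(\xi)\,2\pi i\xi_j$ with $\psi^{(j)}(\xi)=(1-\chi(|\xi|))\xi_j/(2\pi i|\xi|^2)$. Your dyadic kernel estimate then applies to each $\psi^{(j)}$.

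Your alternative route, $P_{>N}=\sum_{M\ge 2N}P_M$, avoids both issues. Each $P_M$ has symbol supported in $|k|\sim M$, so it factors as $M^{-1}$ times a multiplier with Schwartz kernel applied to $\nabla f$. It is arguably the cleaner of your two arguments.
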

With this inequality, we can deduce the estimates for $w_{q+1}$. 
\begin{lemma}[Estimates on $w_{q+1}$]\label{lem:estimates-wq1}
For $1\leq p\leq\infty$,
we have the following estimates,
\begin{align}
    &\|\partial_{t}w_{q+1}^{(p)}\|_{C_tL_x^p} + \|\partial_{x}w_{q+1}^{(p)}\|_{C_tL_x^p}
    \lesssim
    \ell_q^{-1}r^{\frac1p-\frac12}, \label{eq:wp-derivative-new}\\
    &\|\partial_{t}w_{q+1}^{(c)}\|_{C_tL_x^p} + \|\partial_{x}w_{q+1}^{(c)}\|_{C_tL_x^p}
    \lesssim
    \ell_q^{-1}\tau^{-1}\lambda r^{\frac1p-\frac12}, \label{eq:wc-derivative-new}\\
    &\|w_{q+1}\|_{C_{t,x}^N}
    \lesssim
    \ell_q^{-(N+1)}\sigma^{-1}r^{-\frac12}
    +
    \ell_q^{-1}\sigma^{N-1}r^{-\frac12},
    \qquad 1\leq N\leq8, \label{eq:w-CN-new}\\
    &\|w_{q+1}\|_{C_{t,x}^{\beta}}
    +
    \|\partial_t w_{q+1}\|_{C_tB_{\infty,\infty}^{\beta-1}}
    \lesssim
    \ell_q^{-1}\sigma^{\beta-1}r^{-\frac12},
    \qquad 0\leq\beta<1.\label{eq:w-Cb-new}
\end{align}
\end{lemma}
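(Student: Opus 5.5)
The plan is to bound each building block directly through its product structure: an amplitude $a_{(\pm)}$ (or its Littlewood--Paley pieces), the intermittent profile $\phi_{(r)}$ (or its pieces), and the oscillation $\cos(2\pi\sigma(x+r_\pm t))$. Hölder's inequality then controls everything with the amplitude in $L^\infty$ via \eqref{eq:amplitude-estimate}, the profile in $L^p$ via Lemma~\ref{buildingblockestlemma}, and the cosine by $1$.

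For \eqref{eq:wp-derivative-new}, we use the explicit formulas for $\partial_t w^{(p)}_{q+1}$ and $\partial_x w^{(p)}_{q+1}$ computed above. The leading terms $2\pi r_\pm a_{(\pm)}\phi_{(r)}\sin(\cdot)$ and $2\pi a_{(\pm)}\phi_{(r)}\sin(\cdot)$ are bounded by $\|a_{(\pm)}\|_{L^\infty}\|\phi_{(r)}\|_{L^p}\lesssim \ell_q^{-1}r^{1/p-1/2}$. The remaining terms carry a factor $\sigma^{-1}$ and either $\partial a_{(\pm)}$, costing $\ell_q^{-2}$, or $\partial_x\phi_{(r)}$, costing $\lambda$. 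Since $\sigma^{-1}\ell_q^{-2}=\lambda_{q+1}^{-5}\lambda_q^{96}\ll \ell_q^{-1}$ (because $b$ is large) and $\sigma^{-1}\lambda\ll1$, these terms are lower order.

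For \eqref{eq:wc-derivative-new}, we differentiate \eqref{def:wc1}--\eqref{def:wc2} and use Lemma~\ref{lemma:frequencylemma}. The dominant contribution again comes from a derivative hitting the cosine, which gains a factor $\sigma$ that cancels the $\sigma^{-1}$. In the first piece this yields $\|P_{>\tau}a_{(\pm)}\|_{L^\infty}\|\phi_{(r)}\|_{L^p}\lesssim \tau^{-1}\ell_q^{-2}r^{1/p-1/2}$. In the second piece it yields $\|P_{\le\tau}a\|_{L^\infty}\|P_{>\tau}\phi_{(r)}\|_{L^p}\lesssim \ell_q^{-1}\tau^{-1}\lambda r^{1/p-1/2}$. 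The first bound is dominated by the second because $\ell_q^{-1}=\lambda_q^{48}\ll\lambda_{q+1}=\lambda$. Derivatives landing on the amplitude or profile factors are handled the same way, with the extra $\sigma^{-1}$ making them smaller.

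For \eqref{eq:w-CN-new}, we work with the representation \eqref{eq:total-frequency-corrected-perturbation}, $w_{q+1}=\sigma^{-1}\sum_\pm P_{\le\tau}a_{(\pm)}\,P_{\le\tau}\phi_{(r)}\cos(\cdot)$, and distribute $N$ mixed derivatives by Leibniz. There is one technical point in bounding the amplitude factor: $P_{\le\tau}$ acts only in $x$ and commutes with $\partial_t$, but its kernel is not positive, so it is not literally bounded on $L^\infty$. Lemma~\ref{lemma:frequencylemma} with $p=\infty$ nevertheless gives $\|\partial^j P_{\le\tau}a\|_{L^\infty}\lesssim \ell_q^{-(j+1)}$. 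The profile satisfies $\|\partial_x^j P_{\le\tau}\phi_{(r)}\|_{L^\infty}\lesssim \min(\lambda,\tau)^j r^{-1/2}\le\tau^j r^{-1/2}$, and each derivative on the cosine costs $\sigma$. The extreme terms of the Leibniz expansion are then $\sigma^{-1}\ell_q^{-(N+1)}r^{-1/2}$, when all derivatives hit $a$, and $\ell_q^{-1}\sigma^{N-1}r^{-1/2}$, when all hit the cosine. Since $\tau,\ell_q^{-1}\le\sigma$, every mixed term is bounded by the larger of these two endpoint terms (a product of factors each at most $\sigma$).

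For \eqref{eq:w-Cb-new}, we interpolate between $\|w_{q+1}\|_{L^\infty}\lesssim\sigma^{-1}\ell_q^{-1}r^{-1/2}$ and the $N=1$ bound $\|w_{q+1}\|_{C^1}\lesssim \ell_q^{-1}r^{-1/2}$. Here we absorb $\ell_q^{-2}\sigma^{-1}\le\ell_q^{-1}$, which holds because $\ell_q^{-1}\ll\sigma$. This gives $\ell_q^{-1}\sigma^{\beta-1}r^{-1/2}$. For the Besov term, note that $\partial_t w_{q+1}$ has frequency support in the shell $|k|\sim\tilde\sigma$ by Lemma~\ref{lemma:supp}. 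Hence only $O(1)$ Littlewood--Paley blocks contribute, and $\|\partial_t w_{q+1}\|_{B^{\beta-1}_{\infty,\infty}}\lesssim\sigma^{\beta-1}\|\partial_t w_{q+1}\|_{L^\infty}\lesssim \sigma^{\beta-1}\ell_q^{-1}r^{-1/2}$, using the $N=1$ bound.

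The main obstacle is bookkeeping rather than ideas: keeping the parameter hierarchy $\ell_q^{-1}\ll r^{-1}\ll\lambda\ll\tau\ll\sigma$ consistent (for $r^{-1}=\lambda^{10\varepsilon}$ versus $\ell_q^{-1}=\lambda^{48/b}$ one only needs $b$ large relative to $\varepsilon^{-1}$, and the bounds do not really compare them) and handling the $L^\infty$ boundedness of $P_{\le\tau}$, which I would take from Lemma~\ref{lemma:frequencylemma}.
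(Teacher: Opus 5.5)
Your proposal is correct and follows essentially the same route as the paper: H\"older with \eqref{eq:amplitude-estimate}, Lemma~\ref{buildingblockestlemma} and Lemma~\ref{lemma:frequencylemma} for the derivative bounds, the Leibniz rule for $C^N$, and interpolation between the $C^0$ and $C^1$ bounds for $C^\beta$. The one deviation is the Besov term, where you use the frequency shell of Lemma~\ref{lemma:supp} to bound it directly by $\sigma^{\beta-1}\|\partial_t w_{q+1}\|_{L^\infty}$, whereas the paper first bounds $\|\partial_t w_{q+1}\|_{C_tB^{-1}_{\infty,\infty}}\lesssim\sigma^{-1}\ell_q^{-1}r^{-1/2}$ and then interpolates; both give the same estimate (and, as a small correction to your aside, $P_{\le\tau}$ is genuinely bounded on $L^\infty$ uniformly in $\tau$, since its kernel has uniformly bounded $L^1$ norm, which is exactly what Lemma~\ref{lemma:frequencylemma} records).
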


\begin{proof}
By \eqref{eq:amplitude-estimate} and \eqref{intermittent estimates2-endpt2},
we estimate the derivatives of the principal perturbation,
where
\begin{align}
    \partial_t w_{(+)}^{(p)}
    &=
    -2\pi r_+
    a_{(+)}(t,x)\phi_{(r)}(x)
    \sin\left(2\pi\sigma(x+r_+t)\right) \notag\\
    &\quad
    +
    \frac{1}{\sigma}
    \partial_ta_{(+)}(t,x)\phi_{(r)}(x)
    \cos\left(2\pi\sigma(x+r_+t)\right),\\
    \partial_t w_{(-)}^{(p)}
    &=
    -2\pi r_-
    a_{(-)}(t,x)\phi_{(r)}(x)
    \sin\left(2\pi\sigma(x+r_-t)\right) \notag\\
    &\quad
    +
    \frac{1}{\sigma}
    \partial_ta_{(-)}(t,x)\phi_{(r)}(x)
    \cos\left(2\pi\sigma(x+r_-t)\right).
\end{align}
Hence, 
we obtain
\begin{align}
    \|\partial_t w_{q+1}^{(p)}\|_{C_tL_x^p}
    &\lesssim
    \left(
    \ell_q^{-1}
    +
    \sigma^{-1}\ell_q^{-2}
    \right)
    r^{\frac1p-\frac12}. \label{eq:dt-wp-pre}
\end{align}

For the spatial derivative,
we have
\begin{align}
    \partial_x w_{(+)}^{(p)}
    &=
    -2\pi
    a_{(+)}(t,x)\phi_{(r)}(x)
    \sin\left(2\pi\sigma(x+r_+t)\right) \notag\\
    &\quad
    +
    \frac{1}{\sigma}
    \partial_xa_{(+)}(t,x)\phi_{(r)}(x)
    \cos\left(2\pi\sigma(x+r_+t)\right) \notag\\
    &\quad
    +
    \frac{1}{\sigma}
    a_{(+)}(t,x)\partial_x\phi_{(r)}(x)
    \cos\left(2\pi\sigma(x+r_+t)\right),
\end{align}
Similarly we obtain
\begin{align}
    \|\partial_x w_{q+1}^{(p)}\|_{C_tL_x^p}
    &\lesssim
    \ell_q^{-1}r^{\frac1p-\frac12}
    +
    \sigma^{-1}\ell_q^{-2}r^{\frac1p-\frac12}\notag\\
    &\quad
    +
    \sigma^{-1}\ell_q^{-1}
    \lambda
    r^{\frac1p-\frac12}\notag\\
    &\lesssim
    \ell_q^{-1}r^{\frac1p-\frac12}. \label{eq:dx-wp-proof-new}
\end{align}
Combining \eqref{eq:dt-wp-pre} and \eqref{eq:dx-wp-proof-new},
we conclude that
\begin{align}
    \|\partial_{t}w_{q+1}^{(p)}\|_{C_tL_x^p}
    + \|\partial_{x}w_{q+1}^{(p)}\|_{C_tL_x^p}
    &\lesssim
    \ell_q^{-1}r^{\frac1p-\frac12}. \label{eq:wp-derivative-proof-new}
\end{align}

Next, we consider the frequency corrector defined by \eqref{def:wc1}--\eqref{def:wc2}, 
\begin{align}
    w_{q+1}^{(c)}
    &=
    -\frac{1}{\sigma}
    P_{>\tau}\left(a_{(+)}\right)(t,x)
    \phi_{(r)}(x)
    \cos\left(2\pi\sigma(x+r_+t)\right) \notag\\
    &\quad
    -\frac{1}{\sigma}
    P_{\leq\tau}\left(a_{(+)}\right)(t,x)
    P_{>\tau}\left(\phi_{(r)}\right)(x)
    \cos\left(2\pi\sigma(x+r_+t)\right) \notag\\
    &\quad
    -\frac{1}{\sigma}
    P_{>\tau}\left(a_{(-)}\right)(t,x)
    \phi_{(r)}(x)
    \cos\left(2\pi\sigma(x+r_-t)\right) \notag\\
    &\quad
    -\frac{1}{\sigma}
    P_{\leq\tau}\left(a_{(-)}\right)(t,x)
    P_{>\tau}\left(\phi_{(r)}\right)(x)
    \cos\left(2\pi\sigma(x+r_-t)\right). \label{eq:wc-new-proof}
\end{align}
By Lemma \ref{lemma:frequencylemma},
we have
\begin{align}
    &\left\|\partial_{t}w_{q+1}^{(c)}\right\|_{C_tL_x^p}
    +
    \left\|\partial_{x}w_{q+1}^{(c)}\right\|_{C_tL_x^p}\notag\\
    &\lesssim
    \left(
    \left\|P_{>\tau}a_{(+)}\right\|_{C_tL_x^\infty}
    +
    \left\|P_{>\tau}a_{(-)}\right\|_{C_tL_x^\infty}
    \right)
    \left\|\phi_{(r)}\right\|_{L_x^p} \notag\\
    &\quad
    +
    \sigma^{-1}
    \left(
    \left\|\partial_{t,x}P_{>\tau}a_{(+)}\right\|_{C_tL_x^\infty}
    +
    \left\|\partial_{t,x}P_{>\tau}a_{(-)}\right\|_{C_tL_x^\infty}
    \right)
    \left\|\phi_{(r)}\right\|_{L_x^p} \notag\\
    &\quad
    +
    \sigma^{-1}
    \left(
    \left\|P_{>\tau}a_{(+)}\right\|_{C_tL_x^\infty}
    +
    \left\|P_{>\tau}a_{(-)}\right\|_{C_tL_x^\infty}
    \right)
    \left\|\partial_x\phi_{(r)}\right\|_{L_x^p} \notag\\
    &\quad
    +
    \left(
    \left\|P_{\leq\tau}a_{(+)}\right\|_{C_tL_x^\infty}
    +
    \left\|P_{\leq\tau}a_{(-)}\right\|_{C_tL_x^\infty}
    \right)
    \left\|P_{>\tau}\phi_{(r)}\right\|_{L_x^p} \notag\\
    &\quad
    +
    \sigma^{-1}
    \left(
    \left\|\partial_{t,x}P_{\leq\tau}a_{(+)}\right\|_{C_tL_x^\infty}
    +
    \left\|\partial_{t,x}P_{\leq\tau}a_{(-)}\right\|_{C_tL_x^\infty}
    \right)
    \left\|P_{>\tau}\phi_{(r)}\right\|_{L_x^p} \notag\\
    &\quad
    +
    \sigma^{-1}
    \left(
    \left\|P_{\leq\tau}a_{(+)}\right\|_{C_tL_x^\infty}
    +
    \left\|P_{\leq\tau}a_{(-)}\right\|_{C_tL_x^\infty}
    \right)
    \left\|\partial_xP_{>\tau}\phi_{(r)}\right\|_{L_x^p} \notag\\
    &\lesssim
    \tau^{-1}\ell_q^{-2}r^{\frac1p-\frac12}
    +
    \sigma^{-1}\tau^{-1}\ell_q^{-3}r^{\frac1p-\frac12}
    +
    \sigma^{-1}\tau^{-1}\ell_q^{-2}
    \lambda r^{\frac1p-\frac12} \notag\\
    &\quad
    +
    \ell_q^{-1}\tau^{-1}\lambda r^{\frac1p-\frac12}
    +
    \sigma^{-1}\ell_q^{-2}\tau^{-1}\lambda r^{\frac1p-\frac12}
    +
    \sigma^{-1}\ell_q^{-1} \lambda \tau^{-1} r^{\frac1p-\frac12}
    \notag\\
    &\lesssim
    \ell_q^{-1}\tau^{-1}\lambda r^{\frac1p-\frac12}.
\end{align}

Finally,
by using the Leibniz rule, 
we obtain 
\begin{align}
    \|w_{q+1}\|_{C_{t,x}^N}
    &\lesssim
    \frac{1}{\sigma}
    \sum_{j+k+m=N}
    \sigma^{m}
    \left(
    \|a_{(+)}\|_{C_{t,x}^j}
    +
    \|a_{(-)}\|_{C_{t,x}^j}
    \right)
    \lambda^k
    r^{-\frac12}. \notag \\ 
    &\lesssim
    \frac{1}{\sigma}
    \sum_{j+k+m=N}
    \sigma^{N-j}
    \lambda^k
    \ell_q^{-(j+1)}
    r^{-\frac12}\notag \\
    &=
    \ell_q^{-1}\sigma^{N-1}r^{-\frac12},
    \qquad 1\leq N\leq8. \label{eq:CN-sharp-proof}
\end{align}
Moreover, we can calculate as 
\begin{align}
\|\partial_t w_{q+1}\|_{C_tB_{\infty,\infty}^{-1}}
&\lesssim
\sigma^{-1}
\left(
\|a_{(+)}\|_{C_{t,x}}
+
\|a_{(-)}\|_{C_{t,x}}
\right)
\|\phi_{(r)}\|_{L^\infty}\notag\\
&\quad +\sigma^{-2}
\left(
\|a_{(+)}\|_{C_{t,x}^1}
+
\|a_{(-)}\|_{C_{t,x}^1}
\right)
\|\phi_{(r)}\|_{L^\infty}
\notag\\
&\lesssim
\ell_q^{-1}\sigma^{-1}r^{-\frac12}+\ell_q^{-2}\sigma^{-2}r^{-\frac12}.
\end{align}
For $0\leq \beta<1$, by interpolation we can get 
\begin{align}
    \|w_{q+1}\|_{C_{t,x}^{\beta}}
    +
    \|\partial_t w_{q+1}\|_{C_tB_{\infty,\infty}^{\beta-1}}
    \lesssim
    \ell_q^{-1}\sigma^{\beta-1}r^{-\frac12}.
\end{align}
This completes the proof.
\end{proof}
\section{Estimates for the new error}\label{sec4}
\subsection{Decomposition of $E_{q+1}$}

Substituting $u_{q+1}=u_q+w_{q+1}$ into the approximate equation,
we obtain
\begin{align}
    E_{q+1}
    &=
    E_q
    +(\partial_{tt}-\partial_{xx})w_{q+1}
    -2\Theta_1\,\partial_tu_q\,\partial_tw_{q+1}
    -2\Theta_2\,\partial_x u_q\,\partial_x w_{q+1} \notag\\
    &\quad
    -\Theta_1\left(\partial_tw_{q+1}\right)^2
    -\Theta_2\left(\partial_x w_{q+1}\right)^2.
    \label{eq:E-next-new}
\end{align}
Since $w_{q+1}=w_{q+1}^{(p)}+w_{q+1}^{(c)}$,
we have
\begin{align}
    E_{q+1}
    &=
    \underbrace{
    E_q
    -\Theta_1\left(\partial_tw_{q+1}^{(p)}\right)^2
    -\Theta_2\left(\partial_x w_{q+1}^{(p)}\right)^2
    }_{E_{q+1}^{\mathrm{osc}}} \notag\\
    &\quad
    \underbrace{
    -2\Theta_1\,\partial_tw_{q+1}^{(p)}\partial_tw_{q+1}^{(c)}
    -\Theta_1\left(\partial_tw_{q+1}^{(c)}\right)^2
    -2\Theta_2\,\partial_x w_{q+1}^{(p)}\cdot\partial_x w_{q+1}^{(c)}
    -\Theta_2\left(\partial_x w_{q+1}^{(c)}\right)^2
    }_{E_{q+1}^{\mathrm{cor}}} \notag\\
    &\quad
    \underbrace{
    (\partial_{tt}-\partial_{xx})w_{q+1}
    -2\Theta_1\,\partial_tu_q\,\partial_tw_{q+1}
    -2\Theta_2\,\partial_x u_q\cdot\partial_x w_{q+1}
    }_{E_{q+1}^{\mathrm{lin}}}.
    \label{eq:E-decomposition-new}
\end{align}
Therefore,
we define
\begin{align}
    E_{q+1}
    =
    E_{q+1}^{\mathrm{osc}}
    +
    E_{q+1}^{\mathrm{cor}}
    +
    E_{q+1}^{\mathrm{lin}}.
\end{align}
\subsection{$C_t{H}_x^{-s}$-Estimates for the Error}

\begin{lemma}
\label{lemma:hs}
    For $s > 1$, we have the following estimates
    \begin{align}
        \label{esti:Eosc}
        &\|{E}_{q+1}^{\mathrm{osc}}\|_{C_t\Hs} \lesssim  \ell_q^{-3}(\lambda r)^{-1}r^{-\frac12} +  \ell_q^{-2}\sigma^{-s}r^{-\frac12}+\tau^{-1}\ell_q^{-2}\lambda, \\
        \label{esti:Elin}
        &\|{E}_{q+1}^{\mathrm{lin}}\|_{C_t\Hs} \lesssim  \sigma^{1-s}\ell_q^{-1} ,\\
         \label{esti:Ecor}
        &\|{E}_{q+1}^{\mathrm{cor}}\|_{C_t\Hs} \lesssim   \tau^{-1}\lambda\ell_q^{-3}.
    \end{align}
\end{lemma}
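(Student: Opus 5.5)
We treat the three pieces of the decomposition \eqref{eq:E-decomposition-new} separately. Two facts recur. First, for $s>1$ the embedding $L^1_x\hookrightarrow H^{-s}_x$ holds, since $|\widehat f(k)|\le\|f\|_{L^1}$ and $\sum_k(1+k^2)^{-s}<\infty$. Second, if $\widehat f$ is supported in $\{|k|\gtrsim N\}$, then $\|f\|_{\Hs}\lesssim N^{-s}\|f\|_{L^2}$. The latter gains from high frequency, and we also use its $L^1$ variant $\|f\|_{\Hs}\lesssim N^{1/2-s}\|f\|_{L^1}$.

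\textbf{The term $E^{\mathrm{osc}}_{q+1}$.} We use the expansion \eqref{def:Eosc} together with the identity that follows it. These give $E^{\mathrm{osc}}_{q+1}=-(E^{\mathrm{osc},1}+E^{\mathrm{osc},2}+E^{\mathrm{osc},3})$ minus the $\cos^2$-reduced remainder. The cross term $E^{\mathrm{osc},1}$ vanishes identically by \eqref{eq:cross-direction-cancellation}.

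The remainder consists of two kinds of terms.
\begin{itemize}
\item The terms $a_{(\pm)}^2P_{>(\lambda r)/2}(\phi_{(r)}^2)$ have $\phi_{(r)}^2$ at frequency $\gtrsim\lambda r$. We move one derivative off them, either by writing $P_{>\lambda r/2}\phi^2_{(r)}=\partial_x\Psi$ with $\|\Psi\|_{L^1}\lesssim(\lambda r)^{-1}\|\phi_{(r)}^2\|_{W^{1,1}}/\lambda$, or by a commutator argument in which the derivative falls on $a^2_{(\pm)}$ ($\ell_q^{-3}$ by \eqref{eq:amplitude-estimate}). Using $\|\partial_x F\|_{\Hs}\lesssim\|F\|_{H^{1-s}}$ together with $L^1\hookrightarrow H^{1-s}$ would require $s>3/2$, so instead we interpolate the $L^2$ bound $r^{-1/2}$ from Lemma~\ref{buildingblockestlemma}. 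This should give $\ell_q^{-3}(\lambda r)^{-1}r^{-1/2}$.
\item The terms $a^2_{(\pm)}\phi^2_{(r)}\cos(4\pi\sigma(x+r_\pm t))$ live at frequency $\approx 2\sigma$, because $a_{(\pm)}\phi_{(r)}$ is essentially localized at frequency $\ll\sigma$. We split $a=P_{\le\tau}a+P_{>\tau}a$ and $\phi_{(r)}=P_{\le\tau}\phi_{(r)}+P_{>\tau}\phi_{(r)}$. The fully low-frequency product is supported at $|k|\sim\sigma$, and its $L^2$ norm is $\lesssim\ell_q^{-2}r^{-1/2}$, which gives $\ell_q^{-2}\sigma^{-s}r^{-1/2}$. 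The pieces involving $P_{>\tau}$ we bound crudely in $L^1$ using Lemma~\ref{lemma:frequencylemma}; this gives $\tau^{-1}\ell_q^{-2}\lambda$.
\end{itemize}

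The terms $E^{\mathrm{osc},2}$ and $E^{\mathrm{osc},3}$ carry prefactors $\sigma^{-1}$ and $\sigma^{-2}$. Their worst factors are $\partial_x\phi_{(r)}\sim\lambda$ and $\ell_q^{-2}$, so in $L^1\hookrightarrow\Hs$ they are $\lesssim\sigma^{-1}\lambda\ell_q^{-2}$, which is absorbed by the preceding bounds since $\sigma\gg\tau$.

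\textbf{The term $E^{\mathrm{lin}}_{q+1}$.} By Lemma~\ref{lemma:supp}, every summand is frequency-localized at $|k|\sim\sigma$. This holds for $\square w_{q+1}$ directly. For the products $\partial u_q\,\partial w_{q+1}$ it holds because \eqref{eq:space-support-q} gives $\widehat u_q$ support in $B(0,\lambda_q^5)$ and $\lambda_q^5\ll\sigma$.

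For $\square w_{q+1}$ we compute explicitly. Each of $\partial_t^2$ and $\partial_x^2$ costs one factor of $\sigma$ on the phase, while the $1/\sigma$ prefactor makes the leading term of size $\sigma\,\ell_q^{-1}$ in $L^2$ (with $r^{-1/2}$ from $\phi$). The high-frequency gain $\sigma^{-s}$ then gives $\sigma^{1-s}\ell_q^{-1}r^{-1/2}$. To match the stated bound we absorb the factor $r^{-1/2}$: we use the $L^1$ version, which yields $\sigma^{1/2-s}\cdot\sigma\ell_q^{-1}r^{1/2}$, or we simply note that $r^{-1/2}=\lambda^{5\varepsilon}$ is harmless against the $\sigma$ power. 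We expect to track this exponent bookkeeping rather than any real difficulty.

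For the products we use $\|\partial u_q\|_{L^\infty}\le\lambda_q^6\ll\ell_q^{-1}$ with the same frequency-gain argument. This term is lower order.

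\textbf{The term $E^{\mathrm{cor}}_{q+1}$.} We bound it in $L^1\hookrightarrow\Hs$ using H\"older with $p=2$ on each factor, via \eqref{eq:wp-derivative-new} and \eqref{eq:wc-derivative-new}. This gives $\ell_q^{-1}\cdot\ell_q^{-1}\tau^{-1}\lambda$ plus the square of the $w^{(c)}$ bound, and hence at most $\tau^{-1}\lambda\ell_q^{-3}$.

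\textbf{Main obstacle.} The delicate step is the low-high term $a^2P_{>\lambda r/2}(\phi^2_{(r)})$. Here the only gain is the frequency $\lambda r$, while the $\Hs$ norm with $s$ merely above $1$ must convert this into a factor $(\lambda r)^{-1}$ without losing the intermittency. The first approach is the derivative-transfer argument above, taking care to put the derivative on $a^2$ via a commutator.
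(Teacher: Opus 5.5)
Your treatment of $E^{\mathrm{osc}}_{q+1}$ and $E^{\mathrm{cor}}_{q+1}$ is essentially the paper's:
\begin{itemize}
\item the cross term vanishes by \eqref{eq:cross-direction-cancellation};
\item the $\cos(4\pi\sigma(x+r_\pm t))$ terms are split with $P_{\le\tau}/P_{>\tau}$ (the paper splits $a_{(\pm)}^2$ and $\phi_{(r)}^2$ instead of $a_{(\pm)}$ and $\phi_{(r)}$, with the same outcome);
\item $E^{\mathrm{osc},2}$, $E^{\mathrm{osc},3}$ and $E^{\mathrm{cor}}_{q+1}$ go through $L^1_x\hookrightarrow H^{-s}_x$.
\end{itemize}
For $a_{(\pm)}^2P_{>(\lambda r)/2}(\phi_{(r)}^2)$ the paper only asserts the bound $\|a_{(\pm)}^2\|_{C^1_{t,x}}(\lambda r)^{-1}\|\phi_{(r)}^2\|_{L^2}$. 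Your derivative transfer is the right justification, and no interpolation is needed. Set $\Psi:=\partial_x^{-1}P_{>(\lambda r)/2}(\phi_{(r)}^2)$. Plancherel gives $\|\Psi\|_{L^2}\lesssim(\lambda r)^{-1}\|\phi_{(r)}\|_{L^4}^2\lesssim(\lambda r)^{-1}r^{-1/2}$. For $s\ge1$ one has $\|\partial_xF\|_{H^{-s}_x}\lesssim\|F\|_{L^2}$. Writing $a^2\partial_x\Psi=\partial_x(a^2\Psi)-\partial_x(a^2)\Psi$ then yields $\ell_q^{-3}(\lambda r)^{-1}r^{-1/2}$.

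For $E^{\mathrm{lin}}_{q+1}$ your route differs from the paper's and is the more robust one. The paper splits $\square w_{q+1}$ into $L_{\pm,1},\dots,L_{\pm,5}$ and uses the gain $\sigma^{-s}$ only for $L_{\pm,3},L_{\pm,4},L_{\pm,5}$. It treats $L_{\pm,1}$, $L_{\pm,2}$ and the interaction terms $2\Theta_1\partial_tu_q\partial_tw_{q+1}$, $2\Theta_2\partial_xu_q\partial_xw_{q+1}$ only via $L^1_x\hookrightarrow H^{-s}_x$. Those $L^1$ bounds do not actually lie below $\sigma^{1-s}\ell_q^{-1}$:
\begin{itemize}
\item the interaction bound $\ell_q^{-2}r^{1/2}=\lambda_q^{96}\lambda_{q+1}^{-5\varepsilon}$ exceeds it, because $\varepsilon\ll s-1$;
\item the bound $\ell_q^{-1}\sigma^{-1}\lambda^2r^{1/2}$ for $L_{\pm,2}$ exceeds it once $s>8/5+\varepsilon$.
\end{itemize}
You observe that every term of $E^{\mathrm{lin}}_{q+1}$ has spatial Fourier support in $B(\pm\sigma,2\tau+\lambda_q^5)$, by Lemma~\ref{lemma:supp}, \eqref{eq:space-support-q} and $\lambda_q^5\ll\tau$. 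This gives the $\sigma^{-s}$ gain on all of them, e.g.\ $\sigma^{-s}\lambda_q^6\ell_q^{-1}$ for the interaction terms. It is what really yields \eqref{esti:Elin} for every $s>1$.

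There is, however, an error in your bookkeeping for the leading term of $\square w_{q+1}$. $\phi_{(r)}$ contributes no factor $r^{-1/2}$ in $L^2$. Lemma~\ref{buildingblockestlemma} with $p=2$ (equivalently $\int_{\T}\phi_{(r)}^2\,dx=1$) gives $\|\phi_{(r)}\|_{L^2}\lesssim1$; $r^{-1/2}$ is its $L^\infty$ size. Neither of your proposed absorptions would work anyway:
\begin{itemize}
\item the $L^1$ variant yields $\sigma^{3/2-s}\ell_q^{-1}r^{1/2}$, which is larger than $\sigma^{1-s}\ell_q^{-1}$ by $\sigma^{1/2}r^{1/2}\sim\lambda^{5/2-5\varepsilon}$;
\item $r^{-1/2}=\lambda^{5\varepsilon}$ is not a constant, so hiding it in $\lesssim$ would prove a weaker lemma.
\end{itemize}
With the correct normalization nothing needs absorbing. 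The term $4\pi^2\sigma(1-r_\pm^2)P_{\le\tau}(a_{(\pm)})P_{\le\tau}(\phi_{(r)})\cos(2\pi\sigma(x+r_\pm t))$ has $H^{-s}_x$ norm $\lesssim\sigma^{-s}\cdot\sigma\,\ell_q^{-1}\|\phi_{(r)}\|_{L^2}\lesssim\sigma^{1-s}\ell_q^{-1}$, which is exactly the stated bound. You already used $\|\phi_{(r)}\|_{L^2}\lesssim1$ implicitly when bounding $E^{\mathrm{cor}}_{q+1}$ with the $p=2$ estimates.
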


\begin{proof}
First, we calculate the $E_{q+1}^{\mathrm{osc}}$, 
\begin{align}
    E_{q+1}^{\mathrm{osc}}= - E^{\mathrm{osc},1}- E^{\mathrm{osc},2} - E^{\mathrm{osc},3}, \label{eq:osc error}
\end{align}
where $E^{\mathrm{osc},1}$, $E^{\mathrm{osc},2}$, $E^{\mathrm{osc},3}$ are defined in \eqref{def:Eosc}. 
For the first two terms of $E^{\mathrm{osc},1}$, 
by \eqref{eq:amplitude-estimate} 
we obtain
\begin{align}
    &\left\|
    2\pi^2(\Theta_1r_+^2+\Theta_2)a_{(+)}^2
    P_{>(\lambda r)/2}\left(\phi_{(r)}^2\right)
    \right\|_{C_tH_x^{-s}} \notag\\
    &\quad
    +
    \left\|
    2\pi^2(\Theta_1r_-^2+\Theta_2)a_{(-)}^2
    P_{>(\lambda r)/2}\left(\phi_{(r)}^2\right)
    \right\|_{C_tH_x^{-s}} \notag\\
    &\lesssim
    \left(
    \|a_{(+)}^2\|_{C_{t,x}^1}
    +
    \|a_{(-)}^2\|_{C_{t,x}^1}
    \right)
    (\lambda r)^{-1}
    \|\phi_{(r)}^2\|_{L_x^2} \notag\\
    &\lesssim
    \ell_q^{-3}(\lambda r)^{-1}
    \|\phi_{(r)}\|_{L_x^4}^2 \notag\\
    &\lesssim
    \ell_q^{-3}(\lambda r)^{-1}r^{-\frac12}.
    \label{eq:osc-preparation-new}
\end{align}
Next, we give thes estimate as
\begin{align}
    &\left\|
    2\pi^2(\Theta_1r_+^2+\Theta_2)a_{(+)}^2(t,x)\phi_{(r)}^2(x)
    \cos\left(4\pi\sigma(x+r_+t)\right)
    \right\|_{C_tH_x^{-s}} \notag\\
    &\lesssim
    \left\|
    2\pi^2(\Theta_1r_+^2+\Theta_2)
    P_{\leq\tau}\left(a_{(+)}^2\right)
    P_{\leq\tau}\left(\phi_{(r)}^2\right)
    \cos\left(4\pi\sigma(x+r_+t)\right)
    \right\|_{C_tH_x^{-s}} \notag\\
    &\quad
    +
    \left\|
    2\pi^2(\Theta_1r_+^2+\Theta_2)
    P_{>\tau}\left(a_{(+)}^2\right)
    P_{\leq\tau}\left(\phi_{(r)}^2\right)
    \cos\left(4\pi\sigma(x+r_+t)\right)
    \right\|_{C_tH_x^{-s}} \notag\\
    &\quad
    +
    \left\|
    2\pi^2(\Theta_1r_+^2+\Theta_2)a_{(+)}^2
    P_{>\tau}\left(\phi_{(r)}^2\right)
    \cos\left(4\pi\sigma(x+r_+t)\right)
    \right\|_{C_tH_x^{-s}} \notag\\
    &:=O_{+,1}+O_{+,2}+O_{+,3}.
    \label{eq:osc-plus-frequency-decomposition}
\end{align}
For the first term,
noticing the corresponding frequency locates in a high frequency annulus, we can have 
\begin{align}
    O_{+,1}
    &\lesssim
    \sigma^{-s}
    \left\|P_{\leq\tau}\left(a_{(+)}^2\right)\right\|_{C_tL_x^\infty}
    \left\|P_{\leq\tau}\left(\phi_{(r)}^2\right)\right\|_{L_x^2}\notag\\
    &\lesssim
    \sigma^{-s}
    \|a_{(+)}\|_{C_{t,x}}^2
    \|\phi_{(r)}\|_{L_x^4}^2\notag\\
    &\lesssim
    \ell_q^{-2}\sigma^{-s}r^{-\frac12}.
    \label{eq:O-plus-1}
\end{align}
For the second term, 
we obtain by using Lemma \ref{lemma:frequencylemma}
\begin{align}
    O_{+,2}
    &\lesssim
    \left\|P_{>\tau}\left(a_{(+)}^2\right)\right\|_{C_tL_x^\infty}
    \left\|\phi_{(r)}^2\right\|_{L_x^1}\notag\\
    &\lesssim
    \tau^{-1}
    \left\|\partial_x\left(a_{(+)}^2\right)\right\|_{C_{t,x}}
    \|\phi_{(r)}\|_{L_x^2}^2\notag\\
    &\lesssim
    \tau^{-1}\ell_q^{-3}.
    \label{eq:O-plus-2}
\end{align}
Similarly, for the third term we have
\begin{align}
    O_{+,3}
    &\lesssim
    \|a_{(+)}\|_{C_{t,x}}^2
    \left\|P_{>\tau}\left(\phi_{(r)}^2\right)\right\|_{L_x^1}\notag\\
    &\lesssim
    \tau^{-1}\ell_q^{-2}
    \left\|\partial_x\left(\phi_{(r)}^2\right)\right\|_{L_x^1}\notag\\
    &\lesssim
    \tau^{-1}\ell_q^{-2}
    \|\phi_{(r)}\|_{L_x^2}
    \|\partial_x\phi_{(r)}\|_{L_x^2}\notag\\
    &\lesssim
    \tau^{-1}\ell_q^{-2}\lambda.
    \label{eq:O-plus-3}
\end{align}
Combining the estimates of $ O_{+,1}$, $ O_{+,2}$ and $ O_{+,3}$, 
we obtain
\begin{align}
    &\left\|
    2\pi^2(\Theta_1r_+^2+\Theta_2)a_{(+)}^2(t,x)\phi_{(r)}^2(x)
    \cos\left(4\pi\sigma(x+r_+t)\right)
    \right\|_{C_tH_x^{-s}} \notag\\
    &\qquad\lesssim
    \ell_q^{-2}\sigma^{-s}r^{-\frac12}
    +
    \tau^{-1}\ell_q^{-3}
    +
    \tau^{-1}\ell_q^{-2}\lambda.
    \label{eq:osc-plus-final}
\end{align}
Similarly,
we can get 
\begin{align}
    &\left\|
    2\pi^2(\Theta_1r_-^2+\Theta_2)a_{(-)}^2(t,x)\phi_{(r)}^2(x)
    \cos\left(4\pi\sigma(x+r_-t)\right)
    \right\|_{C_tH_x^{-s}} \notag\\
    &\qquad\lesssim
    \ell_q^{-2}\sigma^{-s}r^{-\frac12}
    +
    \tau^{-1}\ell_q^{-3}
    +
    \tau^{-1}\ell_q^{-2}\lambda.
    \label{eq:osc-minus-final}
\end{align}
Since $-s < -1/2$, using Sobolev's embedding theorem, 
we know that $L_x^1$ embeds into $\Hs$.
So we get the estimate $E_{\mathrm{osc},2}$ as 
\begin{align}
    \|E_{\mathrm{osc},2}\|_{C_tL_x^{1}}
    &\lesssim
    \sigma^{-1}
    \left(
    \|a_{(+)}\|_{C_{t,x}}
    +
    \|a_{(-)}\|_{C_{t,x}}
    \right)
    \left(
    \|\partial_ta_{(+)}\|_{C_{t,x}}
    +
    \|\partial_ta_{(-)}\|_{C_{t,x}}
    \right)
    \|\phi_{(r)}\|_{L_x^2}^2 \notag\\
    &\quad
    +
    \sigma^{-1}
    \left(
    \|a_{(+)}\|_{C_{t,x}}
    +
    \|a_{(-)}\|_{C_{t,x}}
    \right)
    \left(
    \|\partial_xa_{(+)}\|_{C_{t,x}}
    +
    \|\partial_xa_{(-)}\|_{C_{t,x}}
    \right)
    \|\phi_{(r)}\|_{L_x^2}^2 \notag\\
    &\quad
    +
    \sigma^{-1}
    \left(
    \|a_{(+)}\|_{C_{t,x}}
    +
    \|a_{(-)}\|_{C_{t,x}}
    \right)^2
    \|\phi_{(r)}\|_{L_x^2}
    \|\partial_x\phi_{(r)}\|_{L_x^2} \notag\\
    &\lesssim
    \sigma^{-1}
    \left(
    \ell_q^{-3}
    +
    \ell_q^{-2}\lambda
    \right) \notag\\
    &\lesssim
    \sigma^{-1}\lambda \ell_q^{-3}.
    \label{eq:Eosc2-estimate}
\end{align}
Finally,
for $E_{\mathrm{osc},3}$,
we similarly obtain
\begin{align}
    \|E_{\mathrm{osc},3}\|_{C_tL_x^{1}}
    &\lesssim
    \sigma^{-2}
    \left(
    \|\partial_ta_{(+)}\|_{C_{t,x}}
    +
    \|\partial_ta_{(-)}\|_{C_{t,x}}
    \right)^2
    \|\phi_{(r)}\|_{L_x^2}^2 \notag\\
    &
    +
    \sigma^{-2}
    \left(
    \|\partial_xa_{(+)}\|_{C_{t,x}}
    +
    \|\partial_xa_{(-)}\|_{C_{t,x}}
    \right)^2
    \|\phi_{(r)}\|_{L_x^2}^2 \notag\\
    &
    +
    \sigma^{-2}
    \left(
    \|a_{(+)}\|_{C_{t,x}}
    +
    \|a_{(-)}\|_{C_{t,x}}
    \right)
    \left(
    \|\partial_xa_{(+)}\|_{C_{t,x}}
    +
    \|\partial_xa_{(-)}\|_{C_{t,x}}
    \right)\notag\\
    &\qquad\|\phi_{(r)}\|_{L_x^2}
    \|\partial_x\phi_{(r)}\|_{L_x^2} \notag\\
    &
    +
    \sigma^{-2}
    \left(
    \|a_{(+)}\|_{C_{t,x}}
    +
    \|a_{(-)}\|_{C_{t,x}}
    \right)^2
    \|\partial_x\phi_{(r)}\|_{L_x^2}^2 \notag\\
    &\lesssim
    \sigma^{-2}
    \left(
    \ell_q^{-4}
    +
    \ell_q^{-3}\lambda
    +
    \ell_q^{-2}\lambda^2
    \right) \notag\\
    &\lesssim
    \sigma^{-2}\ell_q^{-2}\lambda^2.
    \label{eq:Eosc3-estimate}
\end{align}
Combining \eqref{eq:osc-preparation-new}, \eqref{eq:osc-plus-final}--\eqref{eq:Eosc3-estimate}, 
we conclude that
\begin{align}
    \|E_{q+1}^{\mathrm{osc}}\|_{C_tH_x^{-s}}
    &\lesssim
    \ell_q^{-3} (\lambda r)^{-1}r^{-\frac12}
    +
    \ell_q^{-2}\sigma^{-s}r^{-\frac12}
    +
    \tau^{-1}\ell_q^{-2}\lambda.
    \label{eq:osc-final-estimate}
\end{align}
Next, we calculate the linear error as
\begin{align}
    &(\partial_{tt}-\partial_{xx})w_{q+1} \notag\\
    &=
    \frac{1}{\sigma}
    \partial_{tt}\left(P_{\leq\tau}(a_{(+)})\right)(t,x)
    P_{\leq\tau}(\phi_{(r)})(x)
    \cos\left(2\pi\sigma(x+r_+t)\right) \notag\\
    &\quad
    -\frac{1}{\sigma}
    \partial_{xx}\left(
    P_{\leq\tau}(a_{(+)})(t,x)
    P_{\leq\tau}(\phi_{(r)})(x)
    \right)
    \cos\left(2\pi\sigma(x+r_+t)\right) \notag\\
    &\quad
    -\frac{2}{\sigma}
    \partial_x\left(
    P_{\leq\tau}(a_{(+)})(t,x)
    P_{\leq\tau}(\phi_{(r)})(x)
    \right)
    \partial_x\cos\left(2\pi\sigma(x+r_+t)\right) \notag\\
    &\quad
    +\frac{2}{\sigma}
    \partial_t\left(P_{\leq\tau}(a_{(+)})\right)(t,x)
    P_{\leq\tau}(\phi_{(r)})(x)
    \partial_t\cos\left(2\pi\sigma(x+r_+t)\right) \notag\\
    &\quad
    +\frac{1}{\sigma}
    P_{\leq\tau}(a_{(+)})(t,x)
    P_{\leq\tau}(\phi_{(r)})(x)
    (\partial_{tt}-\partial_{xx})
    \cos\left(2\pi\sigma(x+r_+t)\right) \notag\\
    &\quad
    +\frac{1}{\sigma}
    \partial_{tt}\left(P_{\leq\tau}(a_{(-)})\right)(t,x)
    P_{\leq\tau}(\phi_{(r)})(x)
    \cos\left(2\pi\sigma(x+r_-t)\right) \notag\\
    &\quad
    -\frac{1}{\sigma}
    \partial_{xx}\left(
    P_{\leq\tau}(a_{(-)})(t,x)
    P_{\leq\tau}(\phi_{(r)})(x)
    \right)
    \cos\left(2\pi\sigma(x+r_-t)\right) \notag\\
    &\quad
    -\frac{2}{\sigma}
    \partial_x\left(
    P_{\leq\tau}(a_{(-)})(t,x)
    P_{\leq\tau}(\phi_{(r)})(x)
    \right)
    \partial_x\cos\left(2\pi\sigma(x+r_-t)\right) \notag\\
    &\quad
    +\frac{2}{\sigma}
    \partial_t\left(P_{\leq\tau}(a_{(-)})\right)(t,x)
    P_{\leq\tau}(\phi_{(r)})(x)
    \partial_t\cos\left(2\pi\sigma(x+r_-t)\right) \notag\\
    &\quad
    +\frac{1}{\sigma}
    P_{\leq\tau}(a_{(-)})(t,x)
    P_{\leq\tau}(\phi_{(r)})(x)
    (\partial_{tt}-\partial_{xx})
    \cos\left(2\pi\sigma(x+r_-t)\right) \notag\\
    &:=
    L_{+,1}+L_{+,2}+L_{+,3}+L_{+,4}+L_{+,5}
    +L_{-,1}+L_{-,2}+L_{-,3}+L_{-,4}+L_{-,5}.
    \label{eq:wave-operator-decomposition}
\end{align}
Moreover,
we can use the identities 
\begin{align}
    \partial_x\cos\left(2\pi\sigma(x+r_\pm t)\right)
    &=
    -2\pi\sigma
    \sin\left(2\pi\sigma(x+r_\pm t)\right),\\
    \partial_t\cos\left(2\pi\sigma(x+r_\pm t)\right)
    &=
    -2\pi\sigma r_\pm
    \sin\left(2\pi\sigma(x+r_\pm t)\right),\\
    (\partial_{tt}-\partial_{xx})
    \cos\left(2\pi\sigma(x+r_\pm t)\right)
    &=
    4\pi^2\sigma^2(1-r_\pm^2)
    \cos\left(2\pi\sigma(x+r_\pm t)\right).
\end{align}
so we can get 
\begin{align}
    L_{+,3}
    &=
    4\pi
    \partial_x\left(
    P_{\leq\tau}(a_{(+)})(t,x)
    P_{\leq\tau}(\phi_{(r)})(x)
    \right)
    \sin\left(2\pi\sigma(x+r_+t)\right),\\
    L_{+,4}
    &=
    -4\pi r_+
    \partial_t\left(P_{\leq\tau}(a_{(+)})\right)(t,x)
    P_{\leq\tau}(\phi_{(r)})(x)
    \sin\left(2\pi\sigma(x+r_+t)\right),\\
    L_{+,5}
    &=
    4\pi^2\sigma(1-r_+^2)
    P_{\leq\tau}(a_{(+)})(t,x)
    P_{\leq\tau}(\phi_{(r)})(x)
    \cos\left(2\pi\sigma(x+r_+t)\right).
\end{align}
Similarly, we have
\begin{align}
    L_{-,3}
    &=
    4\pi
    \partial_x\left(
    P_{\leq\tau}(a_{(-)})(t,x)
    P_{\leq\tau}(\phi_{(r)})(x)
    \right)
    \sin\left(2\pi\sigma(x+r_-t)\right),\\
    L_{-,4}
    &=
    -4\pi r_-
    \partial_t\left(P_{\leq\tau}(a_{(-)})\right)(t,x)
    P_{\leq\tau}(\phi_{(r)})(x)
    \sin\left(2\pi\sigma(x+r_-t)\right),\\
    L_{-,5}
    &=
    4\pi^2\sigma(1-r_-^2)
    P_{\leq\tau}(a_{(-)})(t,x)
    P_{\leq\tau}(\phi_{(r)})(x)
    \cos\left(2\pi\sigma(x+r_-t)\right).
\end{align}
For $L_{+,1}$,
using the embedding $L_x^1\hookrightarrow H_x^{-s}$ for $s>\frac12$, 
we obtain
\begin{align}
    \|L_{+,1}\|_{C_tH_x^{-s}}
    &\lesssim
    \|L_{+,1}\|_{C_tL_x^1} \notag\\
    &\lesssim
    \sigma^{-1}
    \left\|
    \partial_{tt}P_{\leq\tau}\left(a_{(+)}\right)
    \right\|_{C_tL_x^\infty}
    \left\|
    P_{\leq\tau}\left(\phi_{(r)}\right)
    \right\|_{L_x^1} \notag\\
    &\lesssim
    \sigma^{-1}
    \|a_{(+)}\|_{C_{t,x}^2}
    \|\phi_{(r)}\|_{L_x^1} \notag\\
    &\lesssim
    \sigma^{-1}\ell_q^{-3}r^{\frac12}.
    \label{eq:L-plus-1-estimate}
\end{align}
For $L_{+,2}$,
we similarly use the embedding $L_x^1\hookrightarrow H_x^{-s}$ and the product rule to obtain
\begin{align}
    \|L_{+,2}\|_{C_tH_x^{-s}}
    &\lesssim
    \|L_{+,2}\|_{C_tL_x^1} \notag\\
    &\lesssim
    \sigma^{-1}
    \left\|
    \partial_{xx}\left(
    P_{\leq\tau}\left(a_{(+)}\right)
    P_{\leq\tau}\left(\phi_{(r)}\right)
    \right)
    \right\|_{C_tL_x^1} \notag\\
    &\lesssim
    \sigma^{-1}
    \left\|
    \partial_{xx}P_{\leq\tau}\left(a_{(+)}\right)
    \right\|_{C_tL_x^\infty}
    \left\|
    P_{\leq\tau}\left(\phi_{(r)}\right)
    \right\|_{L_x^1} \notag\\
    &\quad
    +
    2\sigma^{-1}
    \left\|
    \partial_xP_{\leq\tau}\left(a_{(+)}\right)
    \right\|_{C_tL_x^\infty}
    \left\|
    \partial_xP_{\leq\tau}\left(\phi_{(r)}\right)
    \right\|_{L_x^1} \notag\\
    &\quad
    +
    \sigma^{-1}
    \left\|
    P_{\leq\tau}\left(a_{(+)}\right)
    \right\|_{C_tL_x^\infty}
    \left\|
    \partial_{xx}P_{\leq\tau}\left(\phi_{(r)}\right)
    \right\|_{L_x^1} \notag\\
    &\lesssim
    \sigma^{-1}
    \left(
    \ell_q^{-3}r^{\frac12}
    +
    \ell_q^{-2}\lambda r^{\frac12}
    +
    \ell_q^{-1}\lambda^2r^{\frac12}
    \right) \notag\\
    &\lesssim
   \ell_q^{-1}\sigma^{-1}\lambda^2r^{\frac12}.
    \label{eq:L-plus-2-estimate}
\end{align}
For $L_{+,3}$,
since the spatial frequency of the product with
$\sin\left(2\pi\sigma(x+r_+t)\right)$
is comparable to $\sigma$, 
we obtain
\begin{align}
    \|L_{+,3}\|_{C_tH_x^{-s}}
    &\lesssim
    \sigma^{-s}
    \left\|
    \partial_x\left(
    P_{\leq\tau}\left(a_{(+)}\right)
    P_{\leq\tau}\left(\phi_{(r)}\right)
    \right)
    \right\|_{C_tL_x^2} \notag\\
    &\lesssim
    \sigma^{-s}
    \left\|
    \partial_xP_{\leq\tau}\left(a_{(+)}\right)
    \right\|_{C_tL_x^\infty}
    \left\|
    P_{\leq\tau}\left(\phi_{(r)}\right)
    \right\|_{L_x^2} \notag\\
    &\quad
    +
    \sigma^{-s}
    \left\|
    P_{\leq\tau}\left(a_{(+)}\right)
    \right\|_{C_tL_x^\infty}
    \left\|
    \partial_xP_{\leq\tau}\left(\phi_{(r)}\right)
    \right\|_{L_x^2} \notag\\
    &\lesssim
    \sigma^{-s}
    \left(
    \ell_q^{-2}
    +
    \ell_q^{-1}\lambda
    \right) \notag\\
    &\lesssim
    \sigma^{-s}\ell_q^{-1}\lambda.
    \label{eq:L-plus-3-estimate}
\end{align}
For $L_{+,4}$, similarly 
we have
\begin{align}
    \|L_{+,4}\|_{C_tH_x^{-s}}
    &\lesssim
    \sigma^{-s}
    \left\|
    \partial_tP_{\leq\tau}\left(a_{(+)}\right)
    P_{\leq\tau}\left(\phi_{(r)}\right)
    \right\|_{C_tL_x^2} \notag\\
    &\lesssim
    \sigma^{-s}
    \left\|
    \partial_tP_{\leq\tau}\left(a_{(+)}\right)
    \right\|_{C_tL_x^\infty}
    \left\|
    P_{\leq\tau}\left(\phi_{(r)}\right)
    \right\|_{L_x^2} \notag\\
    &\lesssim
    \sigma^{-s}
    \|a_{(+)}\|_{C_{t,x}^1}
    \|\phi_{(r)}\|_{L_x^2} \notag\\
    &\lesssim
    \sigma^{-s}\ell_q^{-2}.
    \label{eq:L-plus-4-estimate}
\end{align}
For $L_{+,5}$, we can get 
\begin{align}
    \|L_{+,5}\|_{C_tH_x^{-s}}
    &\lesssim
    \sigma^{1-s}
    \left\|
    P_{\leq\tau}\left(a_{(+)}\right)
    \right\|_{C_tL_x^\infty}
    \left\|
    P_{\leq\tau}\left(\phi_{(r)}\right)
    \right\|_{L_x^2} \notag\\
    &\lesssim
    \sigma^{1-s}
    \|a_{(+)}\|_{C_{t,x}}
    \|\phi_{(r)}\|_{L_x^2} \notag\\
    &\lesssim
    \sigma^{1-s}\ell_q^{-1}.
    \label{eq:L-plus-5-estimate}
\end{align}
where the estimates of $ L_{-,1}+L_{-,2}+L_{-,3}+L_{-,4}+L_{-,5}$ can follow that.
It remains to estimate the interaction terms. We denote 
\begin{align}
    L_2
    &:=
    2\Theta_1\,\partial_tu_q\,\partial_tw_{q+1},\\
    L_3
    &:=
    2\Theta_2\,\partial_xu_q\,\partial_xw_{q+1}.
\end{align}
By the embedding theorem, 
we obtain
\begin{align}
    \| L_2 \|_{C_tL_x^1}
    &\lesssim
    \|\partial_tu_q\|_{C_tL_x^\infty}
    \|\partial_tw_{q+1}\|_{C_tL_x^1}\notag\\
    &\lesssim
    \ell_q^{-2}r^{\frac12}.
    \label{eq:linear-interaction-time}
\end{align}
Similarly, we have
\begin{align}
    \|L_3\|_{C_tL_x^{1}}
     &\lesssim
    \ell_q^{-2}r^{\frac1p-\frac12}.
    \label{eq:linear-interaction-space}
\end{align}
Combining \eqref{eq:L-plus-1-estimate}--\eqref{eq:L-plus-5-estimate} and \eqref{eq:linear-interaction-time}--\eqref{eq:linear-interaction-space}, we finally obtain
\begin{align}
    \|E_{q+1}^{\mathrm{lin}}\|_{C_tH_x^{-s}}
    &\lesssim
    \sigma^{1-s}\ell_q^{-1}.
    \label{eq:linear-error-final}
\end{align}

Finally, for the corrector error, by H\"older's inequality and Lemma \ref{lem:estimates-wq1}, we have
\begin{align}
    &\|E_{q+1}^{\mathrm{cor}}\|_{C_t\Hs}\lesssim \|E_{q+1}^{\mathrm{cor}}\|_{C_tL_x^1} \lesssim
    \left\|
    \partial_t w_{q+1}^{(p)}
    \partial_t w_{q+1}^{(c)}
    \right\|_{C_tL_x^1}
    +
    \left\|
    \left(\partial_t w_{q+1}^{(c)}\right)^2
    \right\|_{C_tL_x^1} \notag\\
    &\quad
    +
    \left\|
    \partial_x w_{q+1}^{(p)}
    \cdot
    \partial_x w_{q+1}^{(c)}
    \right\|_{C_tL_x^1}
    +
    \left\|
    \left(\partial_x w_{q+1}^{(c)}\right)^2
    \right\|_{C_tL_x^1} \notag\\
    &\lesssim
    \|\partial_t w_{q+1}^{(p)}\|_{C_tL_x^2}
    \|\partial_t w_{q+1}^{(c)}\|_{C_tL_x^2}
    +
    \|\partial_t w_{q+1}^{(c)}\|_{C_tL_x^2}^2 \notag\\
    &\quad
    +
    \|\partial_x w_{q+1}^{(p)}\|_{C_tL_x^2}
    \|\partial_x w_{q+1}^{(c)}\|_{C_tL_x^2}
    +
    \|\partial_x w_{q+1}^{(c)}\|_{C_tL_x^2}^2 \notag\\
    &\lesssim
    \ell_q^{-1}
    \left(\tau^{-1}\lambda\ell_q^{-2}\right)
    +
    \left(\tau^{-1}\lambda\ell_q^{-2}\right)^2 \notag\\
    &\lesssim
    \tau^{-1}\lambda\ell_q^{-3}
    +
    \tau^{-2}\lambda^2\ell_q^{-4}
    \lesssim  \tau^{-1}\lambda\ell_q^{-3}.
    \label{eq:corrector-error-estimate}
\end{align}
This completes the proof. 
\end{proof}
\subsection{Proof of main iteration Lemma \ref{lem:iteration}}
   Since 
$
    u_{q+1}=u_q+w_{q+1},
$
by \eqref{eq:uq-CN} and \eqref{eq:w-CN-new}, we have
\begin{align}
    \|u_{q+1}\|_{C^N_{t,x}}
    &\leq
    \|u_q\|_{C^N_{t,x}}
    +
    \|w_{q+1}\|_{C^N_{t,x}}
    \notag\\
    &\lesssim
    \lambda_q^{6N+6}
    +
     \ell_q^{-1}\sigma^{N-1}r^{-\frac12}
    \notag\\
    &\lesssim
    \lambda_{q+1}^{6N+6},
\end{align}
which proves \eqref{eq1:uq-CN}.

\eqref{eq1:inductive-defect} follows directly from Lemma \ref{lemma:hs}.

For \eqref{eq1:inductive-defect-CN}, 
we have
\begin{align}
    \square u_q
    =
    \Theta_1(\partial_tu_q)^2
    +
    \Theta_2(\partial_x u_q)^2
    +
    E_q,
\end{align}
where $\square=\partial_{tt}-\partial_{xx}$.
Since $u_{q+1}=u_q+w_{q+1}$,
we obtain
\begin{align}
    E_{q+1}
    &=
    \square u_{q+1}
    -
    \Theta_1(\partial_tu_{q+1})^2
    -
    \Theta_2(\partial_x u_{q+1})^2 \notag\\
    &=
    E_q
    +
    \square w_{q+1}
    -
    2\Theta_1\,\partial_tu_q\partial_tw_{q+1}
    -
    2\Theta_2\,\partial_x u_q\cdot\partial_x w_{q+1} \notag\\
    &\quad
    -
    \Theta_1(\partial_tw_{q+1})^2
    -
    \Theta_2(\partial_x w_{q+1})^2.
    \label{eq:Eq1-CN}
\end{align}
By using \eqref{eq:inductive-defect-CN} and \eqref{eq:w-CN-new},  
we have
\begin{align}
    \|E_{q+1}\|_{C_{t,x}^N}
    &\lesssim
    \|E_q\|_{C_{t,x}^N}
    +
    \|\square w_{q+1}\|_{C_{t,x}^N} \notag\\
    &\quad
    +
    \|\partial_tu_q\partial_tw_{q+1}\|_{C_{t,x}^N}
    +
    \|\partial_x u_q\cdot\partial_x w_{q+1}\|_{C_{t,x}^N} \notag\\
    &\quad
    +
    \|(\partial_tw_{q+1})^2\|_{C_{t,x}^N}
    +
    \|(\partial_x w_{q+1})^2\|_{C_{t,x}^N} \notag\\
    &\lesssim
    \|E_q\|_{C_{t,x}^N}
    +
    \|w_{q+1}\|_{C_{t,x}^{N+2}} \notag\\
    &\quad
    +
    \|u_q\|_{C_{t,x}^{N+1}}
    \|w_{q+1}\|_{C_{t,x}^{N+1}}
    +
    \|w_{q+1}\|_{C_{t,x}^{N+1}}^2 \notag \\
    &\lesssim
    \lambda_q^{12N+12}
    +
    \ell_q^{-1}\sigma^{N+1}r^{-\frac12} \notag\\
    &\quad
    +
    \lambda_q^{6N+6}
    \ell_q^{-1}\sigma^Nr^{-\frac12}
    +
    \ell_q^{-2}\sigma^{2N}r^{-1}\notag\\
    &\lesssim
    \lambda_{q+1}^{12N+12}.
    \label{eq:Eq1-CN-final}
\end{align}

For \eqref{eq1:time-support-q}, by the inductive assumption \eqref{eq:time-support-q}, we have 
\[
\operatorname{supp}_tE_q\subset[t_q,T].
\]
And we can use the definition of $\chi_q(t)$ to get 
\[
\operatorname{supp}_t w_{q+1}
\subset[t_q-\ell_q/2,T].
\]
Since \(t_{q+1}=t_q-\ell_q<t_q-\ell_q/2\), it follows that
\[
w_{q+1}=0\qquad\text{on }[0,t_{q+1}]\times\mathbb T.
\]
Since \(u_{q+1}=u_q+w_{q+1}\), we conclude that
\[
u_{q+1}=0,\quad E_{q+1}=0\qquad\text{on }[0,t_{q+1}]\times\mathbb T,
\]
which proves \eqref{eq1:time-support-q}.

For \eqref{eq1:space-support-q}--\eqref{item:prop-hatw-supp}, we can use Lemma \ref{lemma:supp}.

For \eqref{item:prop-w-Lp}, we can get it by using \eqref{eq:w-Cb-new}.

For \eqref{item:prop-w-paraproduct}, we can calculate as 
\begin{align}
    \|Q(w_{q+1},w_{q+1})\|_{C_t\Hs} &\lesssim \|Q(w_{q+1},w_{q+1}) - E_q\|_{C_t^0\Hs} + \|E_q\|_{C_t^0\Hs}\notag\\
    &\leq \|E_{\mathrm{osc}} + E_{\mathrm{cor}}\|_{C_t\Hs} + \|E_q\|_{C_t\Hs} \lesssim \delta_{q+1}.
    \label{es:product1}
\end{align}
By the Sobolev embedding, we can get by using \eqref{eq:uq-CN}, \eqref{eq:wp-derivative-new} and \eqref{eq:wc-derivative-new} that 
\begin{align}
    &\|Q(P_Mu_q,w_{q+1})\|_{C_t\Hs}+\|Q(w_{q+1},P_Mu_q)\|_{C_t\Hs}\notag\\
    &\lesssim \|P_M\partial_t u_q\cdot\partial_t w_{q+1}\|_{C_tL_x^1} +\|P_M\partial_x u_q\cdot\partial_x w_{q+1}\|_{C_tL_x^1}\notag\\
    &\lesssim \|P_M\partial_t u_q\|_{C_{t,x}}\|\partial_t w_{q+1}\|_{C_tL_x^1} + \|P_M\partial_x u_q\|_{C_{t,x}}\|\partial_x w_{q+1}\|_{C_tL_x^1}\notag\\
    &\lesssim \delta_{q+2}.
\label{es:product2}
\end{align}
Combining \eqref{es:product1} and \eqref{es:product2}, we can obtain \eqref{item:prop-w-paraproduct}. 
This completes the proof.

\section{Proof of the main theorem}\label{sec5}
Fix $0<\beta<1$ and $s>1$ as in Lemma \ref{lem:iteration}.
We divide the proof into three steps.

\medskip
\noindent
\textbf{Step 1: Construction of the approximate solution sequences.}

We check the iteration step for $q=0$. Given $t_0=T/4$ and choose a function
$\eta\in C_c^\infty((t_0,T))$ such that
\[
    \eta(t)=0
    \qquad\text{for }0\leq t\leq t_0.
\]
Let
\[
    \psi(x):=\cos(2\pi x),
\]
and choose infinitely many distinct constants
\[
    \kappa_i\in(0,\kappa_*),
    \qquad i\in\mathbb N,
\]
where $\kappa_*>0$ will be chosen sufficiently small.

For each $i\in\mathbb N$, define
\begin{equation}\label{eq:initial-subsolution-i}
    u_0^{(i)}(t,x)
    :=
    \kappa_i\eta(t)\psi(x).
\end{equation}
Then
\begin{equation}\label{eq:common-zero-data}
    u_0^{(i)}(0,\cdot)=0,
    \qquad
    \partial_tu_0^{(i)}(0,\cdot)=0
\end{equation}
for every $i$.

Define the corresponding initial error by
\begin{equation}\label{eq:E0-i-definition}
    E_0^{(i)}
    :=
    \partial_{tt}u_0^{(i)}
    -
    \partial_{xx} u_0^{(i)}
    -
    \Theta_1(\partial_t u_0^{(i)})^2
    -
    \Theta_2(\partial_x u_0^{(i)})^2.
\end{equation}
By a direct calculation, we can get 
\begin{align}
    E_0^{(i)}
    ={}&\kappa_i \left(\eta''(t)\psi-\eta(t)\psi''\right)
    -\kappa_i^2\eta(t)'^2\psi^2
    -\kappa_i^2\eta(t)^2(\partial_x\psi)^2.
    \label{eq:E0-explicit}
\end{align}
Therefore, by choosing $\kappa_*>0$ sufficiently small, we may ensure
that
\begin{equation}\label{eq:E0-small}
    \|E_0^{(i)}\|_{C_t^0\Hs}
    \lesssim\delta_1
\end{equation}
uniformly in $i$.

Since $\psi$ has finite Fourier support, for large dyadic number $a$ with 
$\lambda_0^{(i)} = a ^{b^0} =a$, we always have 
\[
    \operatorname{supp}_k\widehat{u_0^{(i)}}
    \subset B(0,\lambda_0^5)
\]
for every $i$. 

Let 
\[
    w_0 := u_0^{(i)},
\]
we can know that $(u_0^{(i)},E_0^{(i)})$ is a smooth solution to \eqref{eq:approx-system}.
For every $i\in\mathbb N$, we obtain a
sequence of smooth approximate solutions
\[
    \{(u_q^{(i)},E_q^{(i)})\}_{q\geq0}
\]
satisfying \eqref{eq:approx-system}. 
By Lemma~\ref{lem:iteration},
\begin{equation}\label{eq:w-beta-summability}
    \|w_{q+1}^{(i)}\|_{C_{t,x}^\beta}
    +
    \|\partial_tw_{q+1}^{(i)}\|_
        {C_{t}B_{\infty,\infty}^{\beta-1}}
    +
     \|w_{q+1}^{(i)}\|_{C_{t}W_x^{1,p_*}}
    \lesssim
    \delta_{q+2},
\end{equation}
we can get that 
the sequence $\{u_q^{(i)}\}_{q\geq0}$ is a Cauchy sequence in
$C_{t,x}^\beta$ and ${C_{t}W_x^{1,p_*}}$, also 
$\{\partial_tu_q^{(i)}\}_{q\geq0}$ is a Cauchy sequence in
$C_{t}B_{\infty,\infty}^{\beta-1}$. Hence there exists a limit $u^{(i)}$ such that
\begin{align}
    u_q^{(i)}
    &\longrightarrow u^{(i)}
    &&\text{in }C_{t,x}^{\beta},
    \label{eq:uqi-limit}\\
    \partial_tu_q^{(i)}
    &\longrightarrow v^{(i)}
    &&\text{in }C_{t}B_{\infty,\infty}^{\beta-1},
    \label{eq:dt-uqi-limit}\\
    u_q^{(i)}
    &\longrightarrow u^{(i)}
    &&\text{in }C_{t}W_x^{1,p_*}.
    \label{eq:w1p-uqi-limit}
\end{align}
For $\varphi\in C_c^{\infty}([0,T]\times\mathbb{T})$, we have  
\begin{align}
\langle v^{(i)},\varphi\rangle
&=
\lim_{q\to\infty}\langle \partial_tu_q^{(i)},\varphi\rangle
=
-\lim_{q\to\infty}\langle u_q^{(i)},\partial_t\varphi\rangle
=
-\langle u^{(i)},\partial_t\varphi\rangle,
\end{align}
hence,
$
v^{(i)}=\partial_tu^{(i)}
$
in the sense of distributions.
Moreover, we can get 
\begin{equation}\label{eq:limit-regularity-i}
   u^{(i)}\in
	C_{t,x}^{\beta}([0,T]\times\mathbb T) \cap C_t^1 ([0,T], B_{\infty,\infty}^{\beta-1}(\mathbb T))\cap C_{t}([0,T],  W^{1,p_*}(\mathbb T)) .
\end{equation}
Moreover, by the iteration Lemma \ref{lem:iteration}, we can get 
\begin{equation}\label{eq:limit-zero-data}
    u^{(i)}(0,\cdot)=0,
    \qquad
    \partial_tu^{(i)}(0,\cdot)=0.
\end{equation}

\medskip

\noindent
\textbf{Step 2: Verification of the products in $H^{-s}$.}

Now we show that $Q(u,u)$ is well-defined in
$C_t \Hs$.
For simplicity, we omit the superscript $(i)$ in this step.
Since
\[
    \operatorname{supp}_k\widehat u_q
    \subset B(0,\lambda_q^5),
\]
we can get 
\begin{align}
    P_{M_j}\partial_t w_l
    &=
    \begin{cases}
        0, & l\neq j,\\
        \partial_t w_j, & l=j,
    \end{cases}
    \qquad
    P_{M_j}\partial_x w_l
    =
    \begin{cases}
        0, & l\neq j,\\
        \partial_x w_j, & l=j,
    \end{cases}
    \label{eq:frequency-separation-new}
\end{align}
where $M_j$ be a dyadic frequency corresponding to the frequency shell
of $w_j$.
By \eqref{item:prop-w-paraproduct} in Lemma \ref{lem:iteration} and the frequency localization
property of $w_{q+1}$, we check as
\begin{align}
    &\sum_{1\leq M,M'\leq M_{\max}}
    \left\| Q(P_M u,P_{M'}u)
    \right\|_{C_t^0H_x^{-s}}\notag\\
    &\lesssim
    \sum_{2M_{\max}\geq q,q'\geq0}
    \left\|
    Q(w_{q},w_{q'})
    \right\|_{C_t^0H_x^{-s}} \notag\\
    &=
    \sum_{2Q_{\max}\geq q\geq0}
    \left\|
    Q(w_{q},w_{q})
    \right\|_{C_t^0H_x^{-s}} 
    +
    2\sum_{0\leq q'<q\leq2Q_{\max}}
    \left\|
    Q(w_{q},w_{q'})
    \right\|_{C_t^0H_x^{-s}} \notag\\
    &\lesssim
    \sum_{q\geq0}\delta_{q}
    <\infty.
\end{align}
So $Q(u,u)$ is well-defined as 
\begin{equation}
    Q(u,u)
    :=
    \lim_{q\to\infty}Q(u_q,u_q)
    \qquad\text{in }C_t\Hs.
    \label{eq:Q-limit-equals-LP-product}
\end{equation}

\medskip

\noindent
\textbf{Step 3: Get infinitely many singular weak solutions with zero initial data.}

First, we check that $u^{(i)}$ is a singular weak solution to \eqref{eq:NLW}. 
By using \eqref{eq:inductive-defect}, we have 
\begin{equation}\label{eq:error-goes-zero}
    E_q^{(i)}
    \longrightarrow0
    \qquad
    \text{in }C_t\Hs.
\end{equation}
For every
$
    \varphi\in
    C_c^\infty([0,T)\times\mathbb T),
$
by \eqref{eq:approx-system}, 
the smooth approximate solution $u_q^{(i)}$ satisfies
\begin{align}
    &\int_0^T
    \left\langle u_q^{(i)},\partial_{tt}\varphi-\partial_{xx}\varphi\right\rangle\,dt -
    \int_0^T\left\langle  Q(u_q^{(i)},u_q^{(i)}), \varphi\right\rangle\,dt
    =\int_0^T\left\langle E_q^{(i)},\varphi \right\rangle\,dt.
    \label{eq:approx-weak-i}
\end{align}
Taking the limit $q\to\infty$ in \eqref{eq:approx-weak-i}, 
we obtain that  $u^{(i)}$ satisfies \eqref{eq:NLW} in $\mathscr D'((0,T)\times\mathbb T)$ 
combining with \eqref{eq:limit-zero-data}.

Let $\kappa_i=2^{-i}\kappa_*$, for every $i<j$, we have
\begin{align}
    \left\|
    u_0^{(i)}(t_*,\cdot)
    -
    u_0^{(j)}(t_*,\cdot)
    \right\|_{C_x^\beta}
    =
    |\kappa_i-\kappa_j|
    \|\psi\|_{C_x^\beta}
    \geq
    \frac12\kappa_i
    \|\psi\|_{C_x^\beta},
    \label{eq:initial-subsolution-separation}
\end{align}
where $t_* \in (t_0,T)$ satisfies $\eta(t_*) \neq 0$.
For each $i\in\mathbb N$, we can choose the parameter $a$ sufficiently large so that
\begin{equation}
    \sum_{q=0}^{\infty}
    \delta_{q+2}^{(i)}
    <
    \frac{1}{16}
    \kappa_i
    \|\psi\|_{C_x^\beta}.
    \label{eq:iteration-tail-relative-small}
\end{equation}
By using Lemma~\ref{lem:iteration}, we obtain 
\begin{align}
    \left\|
    u^{(i)}-u_0^{(i)}
    \right\|_{C_{t,x}^\beta}
    &\leq
    \sum_{q=0}^{\infty}
    \left\|
    w_{q+1}^{(i)}
    \right\|_{C_{t,x}^\beta}\notag\\
    &\lesssim
    \sum_{q=0}^{\infty}
    \delta_{q+2}^{(i)}
    <
    \frac{1}{16}
    \kappa_i
    \|\psi\|_{C_x^\beta}.
    \label{eq:limit-close-initial-subsolution}
\end{align}
Hence, we have
\begin{align}
    \left\| u^{(i)} - u^{(j)}
    \right\|_{C_{t,x}^\beta}
    &\geq
    \left\| u_0^{(i)}-u_0^{(j)}
    \right\|_{C_{x}^\beta}
    -
    \left\|u^{(i)} - u_0^{(i)}
    \right\|_{C_{t,x}^\beta}
    -
    \left\|
    u^{(j)}-u_0^{(j)}
    \right\|_{C_{t,x}^\beta}\notag\\
    &\geq
    \left(
        \frac12-\frac1{16}-\frac1{16}
    \right)
    \kappa_i
    \|\psi\|_{C_{x}^\beta}\notag\\
    &=
    \frac{3}{8}
    \kappa_i
    \|\psi\|_{C_x^\beta}
    >0.
    \label{eq:distinct-limit-solutions}
\end{align}
Therefore, $u^{(i)}\neq u^{(j)}$ whenever $i\neq j$.

This completes the proof of Theorem~\ref{thm1}.
\appendix\label{appendix1}

\section{Proof of the uniqueness of $C^1$ weak solutions}\label{app:auxiliary-lemma}
\begin{proposition}[Uniqueness of $C^1$ weak solutions]
Let $\Theta_1,\Theta_2\in\mathbb R$,
and let $u,v\in C^1([0,T]\times\mathbb T)$ be two weak solutions of
\begin{align}
    \partial_{tt}u-\partial_{xx}u
    =
     \Theta_1(\partial_tu)^2+ \Theta_2(\partial_xu)^2,\notag
\end{align}
with the same initial data
\begin{align}
    u(0,\cdot)=v(0,\cdot),
    \qquad
    \partial_tu(0,\cdot)=\partial_tv(0,\cdot).\notag
\end{align}
Then
\begin{align}
    u=v
    \qquad
    \text{on }[0,T]\times\mathbb T.\notag
\end{align}
In particular,
the weak solution is unique in
$C^1([0,T]\times\mathbb T)$.
\end{proposition}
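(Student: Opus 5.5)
We will prove the proposition by an energy estimate for the difference $w:=u-v$, in the spirit of Sideris. For $u,v\in C^1$ the nonlinearity is continuous, so the weak formulation \eqref{eq:weak-formulation} holds with an honest $C^0$ right-hand side. Subtracting the two equations gives, in the sense of distributions,
\begin{align}
    \partial_{tt}w-\partial_{xx}w
    =
    \Theta_1(\partial_tu+\partial_tv)\,\partial_tw
    +\Theta_2(\partial_xu+\partial_xv)\,\partial_xw
    =:F,\notag
\end{align}
with $w(0)=\partial_tw(0)=0$. Write $A:=\Theta_1(\partial_tu+\partial_tv)$ and $B:=\Theta_2(\partial_xu+\partial_xv)$; both are continuous and bounded on $[0,T]\times\mathbb T$ by a constant $K$ depending on $\|u\|_{C^1}+\|v\|_{C^1}$.

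We work with the energy $\mathcal E(t):=\frac12\int_{\mathbb T}\big((\partial_tw)^2+(\partial_xw)^2\big)\,dx$. Since $w$ is only $C^1$, the identity $\frac{d}{dt}\mathcal E=\int F\,\partial_tw\,dx$ cannot be applied directly, so we first mollify in space and time. Set $w_\varepsilon:=w*\rho_\varepsilon$ with a space–time mollifier, restricting to $t\in[\varepsilon,T-\varepsilon]$ or extending $w$ by zero for $t<0$. The zero extension is legitimate: because $w(0)=\partial_tw(0)=0$, it satisfies the same equation on $(-\infty,T)\times\mathbb T$ with $F$ extended by zero, which is exactly what the boundary terms in \eqref{eq:weak-formulation} encode. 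Then $\square w_\varepsilon=F_\varepsilon$ classically, and
\[
\mathcal E_\varepsilon(t)=\int_0^t\!\int_{\mathbb T}F_\varepsilon\,\partial_tw_\varepsilon\,dx\,dt'.
\]
As $\varepsilon\to0$ we have $\partial w_\varepsilon\to\partial w$ and $F_\varepsilon\to F$ uniformly, because $\partial w$ and $F$ are continuous on the compact set. Passing to the limit yields
\[
\mathcal E(t)=\int_0^t\!\int_{\mathbb T}(A\,\partial_tw+B\,\partial_xw)\,\partial_tw\,dx\,dt'.
\]

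From here the argument closes quickly. Cauchy–Schwarz gives $\mathcal E(t)\le 2K\int_0^t\mathcal E(t')\,dt'$, and Gronwall's inequality gives $\mathcal E\equiv0$. Hence $\partial_tw=\partial_xw=0$, so $w$ is constant. Since $w(0,\cdot)=0$, we conclude $w\equiv0$, that is, $u=v$.

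The main obstacle is the justification step: making the energy identity rigorous for merely $C^1$ weak solutions. This includes checking that the zero-in-time extension solves the equation across $t=0$ using the initial data, and noting that no commutator issue arises. The right-hand side is linear in $\partial w$ with continuous coefficients, so mollifying $F$ as a whole, rather than the product structure, suffices.
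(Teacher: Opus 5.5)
Your argument is correct, but it takes a different route from the paper's.

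\textbf{The paper's route.} The paper uses no energy. It writes the difference via the one-dimensional Duhamel (d'Alembert) formula $w(t,x)=\frac12\int_0^t\int_{x-(t-s)}^{x+(t-s)}F(s,y)\,dy\,ds$. Differentiating this expresses $\partial_tw$ and $\partial_xw$ as integrals of $F$ along the two characteristics $x\pm(t-s)$. It then runs Gronwall on the sup-norm quantity $M(t)=\|\partial_tw(t)\|_{L^\infty_x}+\|\partial_xw(t)\|_{L^\infty_x}$, using $\|F(t)\|_{L^\infty_x}\lesssim C_{u,v}M(t)$.

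\textbf{What each route buys.} The paper's route is shorter and needs no regularization. However, it tacitly uses that a merely $C^1$ distributional solution with zero data coincides with the Duhamel integral. That is a linear uniqueness fact which itself needs an argument of the kind you supply: zero extension across $t=0$, justified by the boundary terms in \eqref{eq:weak-formulation}, plus mollification. The paper's route is also tied to one space dimension. On $\mathbb T^n$ with $n\ge2$ the wave propagator is not bounded on $L^\infty$, so the analogue of $\|\partial w(t)\|_{L^\infty_x}\lesssim\int_0^t\|F\|_{L^\infty_x}$ fails. Your $L^2$ energy route makes the justification explicit. Your remark that $\square w=F$ is linear in $w$, so that $\square w_\varepsilon=F_\varepsilon$ holds exactly with no commutator, is precisely what is needed. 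Your argument also works verbatim on $\mathbb T^n$, so it covers the multi-dimensional equation \eqref{eq:NLW2} as well.

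\textbf{Minor slips.} With $|A|,|B|\le K$ one gets $|A(\partial_tw)^2+B\,\partial_xw\,\partial_tw|\le\tfrac32K\big((\partial_tw)^2+(\partial_xw)^2\big)$, hence $\mathcal E(t)\le 3K\int_0^t\mathcal E$ rather than $2K$. Also, for the zero-extended mollification the time integral starts at $-\varepsilon$ rather than $0$. Neither affects the conclusion.
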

\begin{proof}
Following the argument of Sideris \cite{Sideris1981Thesis}, we set 
\begin{align}
    w:=u-v.
\end{align}
Since $u$ and $v$ satisfy the same equation, we obtain in the sense of distributions
\begin{align}
    \partial_{tt}w-\partial_{xx}w
    &=
    \Theta_1
    \left(
        (\partial_tu)^2-(\partial_tv)^2
    \right)
    +
    \Theta_2
    \left(
        (\partial_xu)^2-(\partial_xv)^2
    \right)\notag\\
    &=
    \Theta_1
    (\partial_tu+\partial_tv)\partial_tw
    +
    \Theta_2
    (\partial_xu+\partial_xv)\partial_xw.
    \label{eq:difference-wave}
\end{align}
For simplicity, we denote the right-hand side by
\begin{align}
    F(t,x)
    &:=
    \Theta_1
    (\partial_tu+\partial_tv)\partial_tw
    +
    \Theta_2
    (\partial_xu+\partial_xv)\partial_xw.
    \label{eq:difference-nonlinearity}
\end{align}
Since $u,v\in C^1([0,T]\times\mathbb T)$, we know that $F\in C([0,T]\times\mathbb T)$. Moreover, the two solutions have the same initial data, so that
\begin{align}
    w(0,\cdot)=0,
    \qquad
    \partial_tw(0,\cdot)=0.
    \label{eq:difference-zero-data}
\end{align}
By the Duhamel formula for the one-dimensional wave equation, after periodically extending the functions from $\mathbb T$ to $\mathbb R$, we have
\begin{align}
    w(t,x)
    =
    \frac12
    \int_0^t
    \int_{x-(t-s)}^{x+(t-s)}
    F(s,y)\,dy\,ds.
    \label{eq:difference-duhamel}
\end{align}
Differentiating \eqref{eq:difference-duhamel} with respect to $t$ and $x$, we obtain
\begin{align}
    \partial_tw(t,x)
    &=
    \frac12
    \int_0^t
    \left[
        F(s,x+t-s)
        +
        F(s,x-t+s)
    \right]\,ds,
    \label{eq:duhamel-time-derivative}\\
    \partial_xw(t,x)
    &=
    \frac12
    \int_0^t
    \left[
        F(s,x+t-s)
        -
        F(s,x-t+s)
    \right]\,ds.
    \label{eq:duhamel-space-derivative}
\end{align}
Therefore,
\begin{align}
    \|\partial_tw(t,\cdot)\|_{L_x^\infty}
    +
    \|\partial_xw(t,\cdot)\|_{L_x^\infty}
    \lesssim
    \int_0^t
    \|F(s,\cdot)\|_{L_x^\infty}\,ds.
    \label{eq:difference-derivative-estimate}
\end{align}
On the other hand, by the definition of $F$, we have
\begin{align}
    \|F(t,\cdot)\|_{L_x^\infty}
    &\leq
    |\Theta_1|
    \|\partial_tu+\partial_tv\|_{L_x^\infty}
    \|\partial_tw\|_{L_x^\infty}\notag\\
    &\quad+
    |\Theta_2|
    \|\partial_xu+\partial_xv\|_{L_x^\infty}
    \|\partial_xw\|_{L_x^\infty}\notag\\
    &\lesssim
    C_{u,v}
    \left(
        \|\partial_tw\|_{L_x^\infty}
        +
        \|\partial_xw\|_{L_x^\infty}
    \right),
    \label{eq:difference-nonlinearity-estimate}
\end{align}
where
\begin{align}
    C_{u,v}
    &:=
    |\Theta_1|
    \left(
        \|\partial_tu\|_{C_{t,x}}
        +
        \|\partial_tv\|_{C_{t,x}}
    \right)
    +
    |\Theta_2|
    \left(
        \|\partial_xu\|_{C_{t,x}}
        +
        \|\partial_xv\|_{C_{t,x}}
    \right)
    <\infty.
\end{align}
Now, we define
\begin{align}
    M(t)
    &:=
    \|\partial_tw(t,\cdot)\|_{L_x^\infty}
    +
    \|\partial_xw(t,\cdot)\|_{L_x^\infty}.
\end{align}
Combining \eqref{eq:difference-derivative-estimate} and
\eqref{eq:difference-nonlinearity-estimate}, we can get
\begin{align}
    M(t)
    \lesssim
    C_{u,v}
    \int_0^t
    M(s)\,ds.
\end{align}
By Gronwall's inequality, we conclude that
\begin{align}
    M(t)=0
    \qquad
    \text{for every }t\in[0,T].
\end{align}
Hence
\begin{align}
    \partial_tw=\partial_xw=0
    \qquad
    \text{on }[0,T]\times\mathbb T.
\end{align}
Since $w(0,\cdot)=0$, we finally obtain
\begin{align}
    w\equiv0
    \qquad
    \text{on }[0,T]\times\mathbb T.
\end{align}
Therefore, $u=v$, which completes the proof.
\end{proof}

\section*{Acknowledgments}
This research is supported in part by NSFC Grants No.12431007 and 62588101. 
The authors would like to thank Professor Yi Zhou and Professor Ningan Lai for helpful discussions.

\bibliographystyle{plain}  
\bibliography{NLW_ref}  
\end{document}